\numberwithin{equation}{subsection}
\newcommand\cC{{\mathcal C}}
\newcommand\cD{{\mathcal D}}
\newcommand\cF{{\mathcal F}}
\newcommand\cI{{\mathcal I}}
\newcommand\cM{{\mathcal M}}
\newcommand\cO{{\mathcal O}}
\newcommand\cS{{\mathcal S}}
\newcommand\cT{{\mathcal T}}
\newcommand\cU{{\mathcal U}}
\newcommand\cX{{\mathcal X}}
\newcommand\bC{{\mathbb C}}
\newcommand\bF{{\mathbb F}}
\newcommand\N{{\mathbb N}}
\newcommand\bP{{\mathbb P}}
\newcommand\Q{{\mathbb Q}}
\newcommand\R{{\mathbb R}}
\newcommand\bV{{\mathbb V}}
\newcommand\Z{{\mathbb Z}}
\newcommand\bfG{{\mathbf G}}
\newcommand\bfH{{\mathbf H}}
\DeclareMathOperator{\GL}{GL}
\DeclareMathOperator{\Hom}{Hom}
\DeclareMathOperator{\PGL}{PGL}
\DeclareMathOperator{\rank}{rank}
\DeclareMathOperator{\der}{der}
\DeclareMathOperator{\ad}{ad}
\DeclareMathOperator{\fix}{fix}
\DeclareMathOperator{\disc}{disc}
\newtheorem{theorem}{Theorem}[section]
\newtheorem{corollary}[theorem]{Corollary}
\newtheorem{proposition}[theorem]{Proposition}
\theoremstyle{definition}
\newtheorem{assumption}[theorem]{Assumption}
\newtheorem{definition}[theorem]{Definition}
\newtheorem{example}[theorem]{Example}
\newtheorem{question}[theorem]{Question}
\newtheorem{remark}[theorem]{Remark}
\newcommand{\alice}[1]{{\color{purple} \sf  Alice: [#1]}}
\title[Shafarevich's conjecture over function fields]{Shafarevich's conjecture for families of hypersurfaces over function fields}
\author{Philip Engel}
\address{Department of Mathematics, Statistics, and Computer Science, University of Illinois at Chicago, Chicago, IL, USA}
\email{pengel@uic.edu}
\author{Alice Lin}
\address{Department of Mathematics, Harvard University, 1 Oxford St, Cambridge, MA 02138, USA}
\email{alicelin@math.harvard.edu}
\author{Salim Tayou}
\address{Department of Mathematics, Dartmouth College, Kemeny Hall, Hanover, NH 03755, USA}
\email{salim.tayou@dartmouth.edu}
\date{\today}
\begin{document}

\begin{abstract}
Given a smooth quasi-projective complex algebraic variety $\mathcal{S}$, we prove that there are only finitely many Hodge-generic non-isotrivial families of smooth projective hypersurfaces over $\mathcal{S}$ of degree $d$ in $\mathbb{P}_{\bC}^{n+1}$. We prove that the finiteness is uniform in $\mathcal{S}$ and give examples where the result is sharp.

We also prove similar results for certain complete intersections in $\mathbb{P}_{\bC}^{n+1}$ of higher codimension and more generally for algebraic varieties whose moduli space admits a period map that satisfies the infinitesimal Torelli theorem. 
\end{abstract}

\maketitle

\setcounter{tocdepth}{1}
\tableofcontents

\section{Introduction}

In his famous ICM address \cite{shafarevich-ICM-1962}, Shafarevich asked the following question: 

\begin{question}
Given a number field $K$ and a finite set of places $T$ of $K$, are there only finitely many algebraic 
varieties of a certain type over $K$ with good reduction outside of $T$?
\end{question}

This question, now known as the Shafarevich conjecture over number fields, is a central problem in number theory. For example, Faltings proved the Shafarevich conjecture for abelian varieties over number fields en route to proving Mordell's conjecture in \cite{faltings-finiteness}.
In the same ICM address, Shafarevich also proposed the following complex-geometric version of the question for families of curves.

\begin{question}\label{question:shafarevich-curves-function-field}
    Fix an integer $g \ge 1$, a complex projective curve $\cS$, and a finite set $\cT$ of points in $\cS$. 
    Are there only finitely many non-isotrivial families of smooth projective genus $g$ curves over $\cS \backslash \cT$?
\end{question}

One can also formulate the question more generally as what is often referred to as the \textit{geometric Shafarevich conjecture}:

\begin{question} \label{question:shafarevich-geom}
    Let $\cS$ be a smooth quasiprojective variety over $\bC$. Are there only finitely many smooth projective non-isotrivial families of algebraic varieties of a given type over $\cS$?
\end{question}

\Cref{question:shafarevich-curves-function-field} was resolved by Parshin and Arakelov \cite{parsin-curves, arakelov}, and \Cref{question:shafarevich-geom} was answered by Faltings for families of abelian varieties over complex curves \cite{faltings1983arakelov}. It has also been studied in other situations in \cite{peters_arakelovs_1986, saito-AVs1993, bedulev-viehweg-shafarevich-surfaces-generaltype, liu_shafarevichs_2005, liu-todorov-yau-zuo-calabi-yau, kovacs_logarithmic_2002, kovacs-lieblich,javanpeykar-litt-integral-points} and more recently in \cite{ascher-taji, zuo-recent-paper}. We refer to the introduction of \cite{kovacs-lieblich} for more references and detailed historical background. Throughout the paper, the finite set of such families could very well be empty.  

In this paper, we prove the following result.

\begin{theorem}
    \label{thm:main-hypersurfaces}
    Let $\cS$ be a smooth quasiprojective variety  over $\bC$ and let $n\ge 1, d \geq 3$. 
    Then there are only finitely many non-isotrivial Hodge-generic\footnote{With respect to the primitive cohomology. One can also replace this condition by a simple condition on the algebraic monodromy of the family, see \Cref{monodromy-is-enough}.} families of smooth projective hypersurfaces of degree $d$ and dimension $n$ over $\cS$.
\end{theorem}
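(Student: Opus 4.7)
My plan is to associate a period map to each family of hypersurfaces, use Griffiths' infinitesimal Torelli theorem to reduce the count of families to a count of period maps on $\cS$, and then bound the latter via finiteness of the underlying local systems together with rigidity of Hodge-generic variations of Hodge structure.

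The first step is the reduction. Given a non-isotrivial family $f: \cX \to \cS$ of smooth degree-$d$, dimension-$n$ hypersurfaces, consider the primitive middle-degree polarized VHS $\bV_f := R^n f_* \Q(n/2)_{\mathrm{prim}}$ together with its induced period map $\phi_f: \cS \to \Gamma \backslash D$ to the associated period-domain quotient. By Griffiths' infinitesimal Torelli theorem for hypersurfaces---valid for $d \geq 3$, $n \geq 1$ up to a handful of classical exceptions that one treats separately---the moduli-to-period map $\cM_{n,d} \to \Gamma \backslash D$ is generically immersive with finite fibers, where $\cM_{n,d}$ is the GIT moduli space of smooth degree-$d$, dimension-$n$ hypersurfaces. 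Consequently, two families are isomorphic as abstract families if and only if their moduli maps to $\cM_{n,d}$ agree, which in turn holds, up to a finite ambiguity, exactly when the period maps coincide. The theorem is thus reduced to the finiteness of Hodge-generic non-constant period maps $\phi: \cS \to \Gamma \backslash D$ arising from smooth hypersurface families.

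The second step counts period maps by separating the underlying local system from its Hodge refinement. The underlying $\Q$-local system $\bL_f$ has fixed rank equal to the primitive middle Betti number, and by the local monodromy theorem its monodromies around each boundary divisor of a log-smooth compactification of $\cS$ are quasi-unipotent with exponent bounded purely in terms of $n$ and $d$. Classical finiteness of semisimple $\Q$-local systems on $\cS$ of prescribed rank and bounded ramification that support a polarizable $\Z$-VHS of the correct Hodge type yields only finitely many candidates for $\bL_f$. Next, fix such an underlying $\bL$. The Hodge-generic hypothesis forces the algebraic monodromy to be the full isometry group of the intersection form---symplectic if $n$ is odd, orthogonal if $n$ is even---by Beauville's computation of the monodromy of the universal family of hypersurfaces; this coincides with the generic Mumford--Tate group, so Deligne's rigidity of Hodge-generic VHS yields only finitely many compatible polarizable VHS structures on $\bL$. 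Combining these two finiteness statements bounds the period maps, and hence the families.

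The main obstacle is controlling the space of polarizable VHS structures on a fixed local system when $\dim \cS > 1$; for $\cS$ a curve this is classical, but in higher dimensions one must appeal to recent Hodge-theoretic tools, such as the quasi-projectivity of period images of Bakker--Brunebarbe--Tsimerman and hyperbolicity results for period maps \emph{à la} Brunebarbe--Deng. The Hodge-generic hypothesis is essential throughout---it places each $\phi_f$ on the Mumford--Tate generic locus where rigidity can be invoked and rules out the pathologies produced by proper Hodge subloci. The uniformity in $\cS$ stated in the theorem follows because every finiteness bound above depends only on numerical data attached to $(n,d)$ and on qualitative invariants of $\cS$ (such as the combinatorics of its boundary), rather than on the individual family.
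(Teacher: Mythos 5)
Your overall architecture is genuinely different from the paper's, and it has a real gap at the reduction step. The paper never counts period maps or local systems at all: it studies the mapping stack $\Hom(\cS,\cU_{n,d})$ directly, proving (i) boundedness (finitely many deformation types) via Kov\'acs--Lieblich, with weak boundedness supplied by the Arakelov inequality for the Griffiths line bundle and its ampleness on the coarse space (Bakker--Brunebarbe--Tsimerman), and (ii) rigidity of the components with maximal monodromy, by injecting $T_f\Hom(\cS,\cU_{n,d})$ into the tangent space of horizontal maps $\Hom^{||}(\cS,X_\Gamma)$ using Flenner's infinitesimal Torelli and then showing this tangent space (the $(-1,1)$ part of $\mathfrak{g}_{\bC}^{\pi_1(\cS)}$, after Peters) vanishes when the monodromy equals $G^{\der}$. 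Your alternative --- count the possible variations of Hodge structure on $\cS$ (Deligne's 1987 finiteness theorem for local systems underlying a $\Z$-PVHS, plus uniqueness of the Hodge filtration on an irreducible local system) and then pass back to families --- is a legitimate strategy for the first half: with maximal monodromy the local system is irreducible, so ``finitely many period maps'' can indeed be extracted this way, and this works in any dimension of $\cS$ without needing your vague appeal to ``bounded ramification'' or to hyperbolicity results.

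The gap is the passage from finitely many period maps to finitely many families. You justify it by saying the moduli-to-period map is ``generically immersive with finite fibers,'' so that equal period maps force equal moduli maps up to finite ambiguity. Infinitesimal Torelli (Flenner) gives only that $\varphi\colon \cU_{n,d}\to X_\Gamma$ is an immersion, hence has \emph{discrete} fibers; it does not give finite fibers, and a discrete closed analytic subset of a quasi-projective variety can be infinite. So for a fixed period map $\phi$ the set of algebraic lifts $f\colon\cS\to\cU_{n,d}$ with $\varphi\circ f=\phi$ is not finite by anything you have established. Closing this hole requires either a global/generic Torelli theorem with finite fibers valid for all $(n,d)$ with $d\ge 3$ (not available --- Donagi's generic Torelli has exceptions, and for cubic surfaces the middle-cohomology period map is \emph{constant}, so your whole pipeline degenerates there; the paper substitutes the occult period map of Allcock--Carlson--Toledo for that case), or else a boundedness statement for the maps $\cS\to\cU_{n,d}$ themselves --- which is precisely the Arakelov/Kov\'acs--Lieblich ingredient of the paper that your proposal omits. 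As written, the finiteness of families, and a fortiori the uniformity you assert at the end, does not follow.
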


We refer to \Cref{def:Hodge-generic} for a definition of Hodge-generic, which in this case we take to be with respect to the moduli space of hypersurfaces of degree $d$ and dimension $n$.

When $\cS$ is a curve\footnote{The uniformity holds more generally for fibers of any smooth family $\cC\rightarrow \cT$, see \Cref{subsection:proofofmaintheorem}.}, we have furthermore the following uniform finiteness result.

\begin{theorem}
    \label{thm:uniform-curves}
    There exists a constant $N=N(d,n,g,r)$, such that 
    for any smooth quasiprojective curve $\mathcal{S}$ of genus $g$ with $r$ punctures, 
    the number of non-isotrivial Hodge-generic families of hypersurfaces 
    of degree $d$ and dimension $n$ is at most $N$.
\end{theorem}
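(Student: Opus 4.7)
The strategy is to apply \Cref{thm:main-hypersurfaces} fiberwise over a universal moduli space of $(g,r)$-curves, in combination with an Arakelov-type boundedness statement in the resulting relative Hom scheme.

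Pass to a level cover of bounded degree (depending only on $g, r$) so that the moduli of smooth genus $g$ curves with $r$ marked points and auxiliary level structure is represented by a smooth quasiprojective fine moduli space $\mathcal{M}$, with universal punctured curve $\pi : \mathcal{U} \to \mathcal{M}$; every target curve $\mathcal{S}$ of topological type $(g,r)$ is realized as $\mathcal{S}_t := \pi^{-1}(t)$ for some $t \in \mathcal{M}$. Similarly pass to a fine moduli scheme $H = H_{d,n}$ for smooth degree $d$, dimension $n$ hypersurfaces, so that a family over $\mathcal{S}$ is equivalent to a morphism $\mathcal{S} \to H$. Then families over the fibers of $\pi$ are jointly parameterized by the relative Hom scheme
\[
\Phi : \mathcal{F} := \underline{\mathrm{Hom}}_{\mathcal{M}}(\mathcal{U},\, H \times \mathcal{M}) \longrightarrow \mathcal{M},
\]
and I let $\mathcal{F}^\circ \subset \mathcal{F}$ denote the locally constructible locus of non-isotrivial Hodge-generic families.

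The proof then reduces to two claims. \emph{Fiber finiteness:} for every $t \in \mathcal{M}$, the fiber $\Phi^{-1}(t) \cap \mathcal{F}^\circ$ is finite, which is immediate from \Cref{thm:main-hypersurfaces} applied to the smooth quasiprojective curve $\mathcal{S}_t$. \emph{Finite type of $\mathcal{F}^\circ$ over $\mathcal{M}$:} for this, I would invoke an Arakelov-type inequality for the Hodge bundles of a non-isotrivial variation of Hodge structure over a curve of type $(g,r)$, in order to bound uniformly the degree against a fixed ample divisor on $H$ of any morphism $\mathcal{S}_t \to H$ arising from such a variation. Such a bound cuts out a finite-type subscheme of $\mathcal{F}$ containing $\mathcal{F}^\circ$; intersecting with the constructible non-isotriviality and Hodge-genericity conditions then realizes $\mathcal{F}^\circ$ as a finite-type locally closed subscheme of $\mathcal{F}$.

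Given these two facts, the function $t \mapsto |\Phi^{-1}(t) \cap \mathcal{F}^\circ|$ is a constructible, integer-valued function on the Noetherian scheme $\mathcal{M}$, hence takes only finitely many values and is bounded above by some $N = N(d,n,g,r)$; pulling this bound back to the original curve $\mathcal{S}$ (up to the fixed level-cover factor) yields the theorem. The main obstacle is the finite-type claim: one must identify explicit Arakelov-type degree bounds adapted to the moduli $H_{d,n}$ and to the $(g,r)$-type of the source curve, and verify that bounded-degree morphisms $\mathcal{S}_t \to H$ form a genuinely finite-type family. Once this boundedness is in place, the remainder of the argument is formal constructibility on a Noetherian base, and rigidity via infinitesimal Torelli plus big monodromy can, if needed, be used to upgrade constructibility to local constancy of the fiber count.
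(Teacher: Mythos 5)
Your overall skeleton (work over a fine moduli space of $(g,r)$-curves with level structure, get uniformity from an Arakelov-type degree bound, and count fiberwise) is the same as the paper's, but as written there are two genuine gaps. First, the finite-type/boundedness claim you defer is not a side issue to be ``invoked'': it is the entire content of the uniformity statement, and your setup makes it harder than necessary. The relative Hom space $\underline{\Hom}_{\mathcal M}(\mathcal U, H\times\mathcal M)$ has non-proper source (the universal \emph{punctured} curve), so it is not of finite type, and one cannot first form it and then cut out a bounded locus; moreover a fine moduli \emph{scheme} $H_{d,n}$ for smooth hypersurfaces does not exist without a rigidification you have not supplied. The paper avoids both problems: it works directly with the DM stack $\cU_{n,d}$, proves it is weakly bounded and compactifiable using the Arakelov inequality (\Cref{prop:arakelov-curve}, with the explicit constant of \Cref{explicit-bound}) applied to the Griffiths line bundle, whose ampleness on the coarse space is the Bakker--Brunebarbe--Tsimerman theorem, and then feeds this into Kov\'acs--Lieblich's \Cref{boundedness-theorem} applied to the universal curve $\cC_{g,r}\to\cM_{g,r}$ with $3$-level structure (smooth, hence smooth at infinity). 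That theorem already yields a uniform bound $N$ on the number of \emph{deformation types} of morphisms $\cC_t\to\cU_{n,d}$, with no relative Hom scheme needed.

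Second, your final counting step is flawed: Hodge-genericity is not a constructible condition. The non-Hodge-generic maps are exactly those landing in the Hodge locus of $\cU_{n,d}$, which is a countable union of proper algebraic subvarieties, so your locus $\mathcal F^\circ$ need not be constructible and the constructible-fiber-count argument does not apply to it. The correct mechanism --- and the one the paper uses --- is rigidity, which you relegate to an optional remark but which is essential: by infinitesimal Torelli (\Cref{prop:hypersurfacetorelli}) and maximality of monodromy for Hodge-generic non-isotrivial families (the generic Mumford--Tate group is simple by Beauville), each such family is an isolated point of $\Hom(\cS_t,\cU_{n,d})$ by \Cref{zero-dim}, hence constitutes its own deformation type; therefore the number of such families is at most the Kov\'acs--Lieblich bound $N=N(d,n,g,r)$, without ever needing the Hodge-generic locus to be algebraically cut out.
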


One may define a discriminant for a family of hypersurfaces over a quasiprojective complex curve $\cS$, see \Cref{def:discriminant}. We then prove the following.

\begin{corollary}
\label{cor:discriminant}
    Let $\mathcal{S}$ be a smooth quasi-projective
    curve over $\bC$. Then there are only finitely many smooth 
    projective non-isotrivial Hodge-generic families of $n$-dimensional hypersurfaces of arbitrary degree $d$ and bounded discriminant over $\mathcal{S}$.
\end{corollary}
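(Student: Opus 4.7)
The plan is to deduce \Cref{cor:discriminant} from \Cref{thm:uniform-curves} by showing that a bound on the discriminant forces a bound on the degree $d$. Writing $\cS=\overline{\cS}\setminus\{p_1,\ldots,p_r\}$ with $\overline{\cS}$ smooth projective of genus $g$, \Cref{thm:uniform-curves} provides, for each individual $d$, an upper bound $N(d,n,g,r)$ on the number of non-isotrivial Hodge-generic families of $n$-dimensional degree-$d$ hypersurfaces on $\cS$. Summing $N(d,n,g,r)$ over any finite set of degrees is itself finite, so it suffices to establish an effective bound $d\le d_0(D,n,g,r)$ on the degrees of non-isotrivial Hodge-generic families whose discriminant (in the sense of \Cref{def:discriminant}) is at most $D$.

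To produce such a $d_0$, I would combine an Arakelov-type upper bound on the Hodge bundle with a Griffiths-theoretic lower bound that scales with $d$. On the one hand, for any smooth semistable completion of $\cX\to\cS$ over $\overline{\cS}$, an Arakelov/Viehweg--Zuo-type slope inequality bounds the degree of each graded piece of the Hodge bundle on $\overline{\cS}$ by an explicit linear combination of $2g-2+r$ and $\operatorname{disc}(\cX/\cS)$, hence by a function of $D,g,r$ only. On the other hand, for Hodge-generic non-isotrivial families the infinitesimal Torelli theorem for hypersurfaces (valid for $d\ge 3$ with the single exception $(n,d)=(2,4)$) combined with the Griffiths residue description of the Hodge filtration shows that the Kodaira--Spencer map has rank growing with $d$, and moreover that a non-constant period map forces the degree of a suitable Hodge sub-bundle to grow in $d$. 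Balancing the two estimates yields $d\le d_0(D,n,g,r)$, after which \Cref{thm:uniform-curves} applied to each $d\le d_0$ concludes the proof.

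The main obstacle is the Griffiths-theoretic lower bound: although the rank of the primitive cohomology and the combinatorial complexity of its Hodge decomposition both grow polynomially in $d$, turning this into a lower bound on the degree of a globally defined Hodge sub-bundle over $\overline{\cS}$ requires care in controlling the contributions at the boundary points $p_i$ and ensuring that the positivity coming from non-isotriviality is not absorbed by the discriminant term on the opposite side of the Arakelov inequality. Once the scaling in $d$ is isolated on the positive side, the reduction to \Cref{thm:uniform-curves} is immediate.
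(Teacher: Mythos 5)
Your overall reduction—bound the degree $d$ in terms of the discriminant, then apply the finiteness theorem degree by degree—is the same skeleton as the paper's proof, but the mechanism you propose for bounding $d$ has a genuine gap. The Arakelov inequality available here (\Cref{prop:arakelov-curve}) bounds $\deg_{\overline{\cS}}(\overline{L})$ by $C(\bV)\cdot(2g-2+r)$, where $C(\bV)$ depends on the Hodge numbers of $\bV$; it does \emph{not} involve the discriminant, and since $\rank(\bV)$ and the level grow with $d$, the upper bound itself grows with $d$. So the ``balancing'' you describe does not close: a non-isotrivial Hodge-generic family of large degree over a fixed $(g,r)$ curve is in no tension with this inequality. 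The other half of your plan—that non-isotriviality plus infinitesimal Torelli forces the degree of some Hodge sub-bundle over $\overline{\cS}$ to \emph{grow} with $d$—is exactly the unproved step; non-isotriviality only gives positivity (degree $>0$), and nothing in the paper or in the standard theory supplies a lower bound scaling with $d$. You correctly flag this as the main obstacle, but without it the argument does not go through. (Minor point: the exception to infinitesimal Torelli for hypersurfaces is cubic surfaces, $(n,d)=(2,3)$, not $(2,4)$.)

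The intended route is far more elementary and bypasses Hodge theory entirely for the degree bound: by \Cref{def:discriminant} and B\'ezout, $\disc(f)=(\deg\overline g(\overline{\cS}))\cdot(\deg D_{n,d})\cdot(\deg\overline g)$, and the discriminant hypersurface has degree $\deg D_{n,d}=(n+2)(d-1)^n$. Hence for a non-constant family, $\disc(f)\ge (n+2)(d-1)^n$, so a bound on the discriminant immediately bounds $d$ (for fixed $n$). One then applies \Cref{thm:main-hypersurfaces} (or, as you do, \Cref{thm:uniform-curves}) to each of the finitely many admissible degrees. If you replace your Arakelov/Griffiths balancing step with this direct computation of the degree of the discriminant locus, your proof plan becomes the paper's proof.
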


We remark that the assumption on the family being Hodge-generic is mild in light  of recent results of \cite{baldi2024distribution}.

Indeed, the authors proved that for $n\geq 3,d\geq 5$, $(n,d)\neq (4,5) $, there exists a Zariski open subset $U$ \footnote{The set $U$ is the complement of certain unspecified maximal components of the Hodge locus.} in the moduli space of hypersurfaces $\mathcal{U}_{n,d}$ such that any variety $\cS$ intersecting $U$ nontrivially\footnote{I.e., the intersection is nonempty.} is Hodge-generic. On the other hand, Saito--Zucker \cite[Section 8]{saito-zucker-non-rigid-k3} and Viehweg--Zuo \cite[Theorem 0.3]{viehweg-zuo-hypersurfaces} have given examples of non-Hodge generic families 
of K3 surfaces\footnote{Which can be chosen to be quartic surfaces.} and hypersurfaces with $d\geq n+1$ which deform non-trivially in positive dimensional families. Hence the assumption on the Mumford--Tate group in \Cref{thm:main-hypersurfaces} is necessary in general. 

Finally, we also show the following result.
\begin{theorem}
    \label{thm:main-complete-intersection}
    Let $\cS$ be a smooth quasiprojective variety  over $\bC$ and let $n\ge 1$, $\underline{d}=(d_1,\ldots,d_r)$ integers with $d_i\geq 2$ for all $i$ and $r\geq 2$. 
    Then there are only finitely many Hodge-generic families of smooth complete intersections of degree $\underline d$ and dimension $n$ over $\cS$, except possibly when $r=2$, $d_1=d_2=2$ and $n$ is even. The finiteness is furthermore uniform when $\cS$ varies in smooth families.
\end{theorem}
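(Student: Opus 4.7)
The plan is to reduce \Cref{thm:main-complete-intersection} to the general finiteness framework developed earlier in the paper for moduli spaces of polarized varieties admitting a period map satisfying infinitesimal Torelli---precisely the framework that underlies the proofs of \Cref{thm:main-hypersurfaces} and \Cref{thm:uniform-curves}. The skeleton of the argument is: build a quasi-projective moduli space, construct a period map, verify infinitesimal Torelli, and then invoke the general theorem.

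First, I would construct the coarse moduli space $\cM_{n,\underline d}$ of smooth $n$-dimensional complete intersections in $\bP^{n+r}$ of multidegree $\underline d$ as a GIT quotient of an open subscheme of $\prod_i \bP H^0(\bP^{n+r},\cO(d_i))$---or of an appropriate Grassmannian when several $d_i$ coincide---by the $\PGL_{n+r+1}$-action. I would then consider the period map $\phi\colon\cM_{n,\underline d}\to\Gamma\backslash D$ sending $X$ to the polarized Hodge structure on its primitive middle cohomology $H^n_{\mathrm{prim}}(X,\Z)$. The essential input is the infinitesimal Torelli theorem for smooth complete intersections, due to Peters and Flenner: the differential of $\phi$ is injective modulo the $\PGL$-orbit direction for all multidegrees $\underline d$ with $r\geq 2$ and $d_i\geq 2$, \emph{except} precisely in the case $r=2$, $d_1=d_2=2$, $n$ even. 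This is exactly the source of the stated exclusion.

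With infinitesimal Torelli in place, I would invoke the general finiteness and uniformity theorem underlying \Cref{thm:main-hypersurfaces} and \Cref{thm:uniform-curves}. The Hodge-generic hypothesis on a family $\cS\to\cM_{n,\underline d}$ supplies the monodromy input that the framework demands, yielding the desired finiteness; uniformity as $\cS$ ranges over fibers of a smooth family $\cC\to\cT$ then follows formally from the same argument used in the hypersurface case, as outlined in \S\ref{subsection:proofofmaintheorem}. The main obstacle I anticipate is the careful bookkeeping needed to verify that the classical infinitesimal Torelli results indeed cover the full range of multidegrees allowed in the theorem and isolate exactly the excluded case; beyond this check, the argument is a direct transplant of the proofs of the hypersurface versions.
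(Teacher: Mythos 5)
Your overall route is the same as the paper's: reduce to the general framework (\Cref{main-theorem}) by verifying infinitesimal Torelli for complete intersections via Flenner, with the even-dimensional $(2,2)$ case as the sole exclusion once $r\geq 2$ (cubic surfaces only arise for $r=1$), and then get uniformity from the boundedness statement. However, there is a genuine gap at the step where you assert that ``the Hodge-generic hypothesis supplies the monodromy input that the framework demands.'' The hypothesis of \Cref{main-theorem} is that the algebraic monodromy group of the given family equals $H$, the algebraic monodromy group of the universal family over $\cI_{\underline d}$. Hodge-genericity only says that the generic Mumford--Tate group of the family is the full group $G$; by Deligne--Andr\'e its algebraic monodromy is then merely a normal subgroup of $G^{\der}$, and if it were a proper normal subgroup (or trivial, as for an isotrivial family) the invariants $\mathfrak{g}^{\pi_1(\cS)}$ could have a nonzero $(-1,1)$-part and the rigidity argument (\Cref{prop:Hrigidity}, \Cref{zero-dim}) would not apply. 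The paper closes exactly this gap by invoking Beauville's computation of the monodromy of universal families of hypersurfaces and complete intersections, which shows the relevant group is $\Q$-simple, so that its derived group coincides with $H$ and acts absolutely irreducibly; combined with non-isotriviality (equivalently, nonconstancy of the period map, using Torelli) this forces the family's monodromy to be all of $H$, as in \Cref{monodromy-is-enough}. You need to supply this big-monodromy input explicitly; it is not a formal consequence of Hodge-genericity.

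A secondary, repairable issue: you propose to run the argument on the coarse moduli space built by GIT, but the framework requires the smooth separated Deligne--Mumford stack $\cI_{\underline d}$ (Benoist) viewed as a fine moduli space. Maps from $\cS$ to the coarse space do not correspond to families (complete intersections have nontrivial automorphisms), and both the Kov\'acs--Lieblich boundedness theorem and the identification of $T_f\Hom(\cS,\cM)$ used in the rigidity step are statements about morphisms to the stack. Replacing your GIT quotient by the stack, as the paper does, fixes this.
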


\begin{remark}
 Theorems \ref{thm:main-hypersurfaces} and \ref{thm:main-complete-intersection} generalize to hypersurfaces in Severi--Brauer schemes over $\cS$. 
\end{remark}

\subsection{Main contribution and earlier work}\label{main-contributions}

The approach of Parshin and Arakelov for \Cref{question:shafarevich-curves-function-field} uses two ingredients: boundedness and rigidity. 
Given $\mathcal{M}$ a separated Deligne--Mumford moduli stack which is given as the fine moduli space of algebraic varieties, and $\mathcal{S}$ a smooth quasi-projective variety, the space of maps $\mathrm{Hom}(\mathcal{S},\mathcal{M})$ corresponds to families of algebraic varieties over $\mathcal{S}$ that are parameterized by $\mathcal{M}$.

Thus, one would like to prove that the mapping space $\mathrm{Hom}(\mathcal{S},\mathcal{M})$ is of finite type (boundedness) and zero-dimensional (rigidity).
Kov\'acs and Lieblich have shown in \cite{kovacs-lieblich} that boundedness holds when $\mathcal{M}$ is a weakly bounded compactifiable Deligne--Mumford stack, see \Cref{subsection:boundednessoffamilies}, 
and that rigidity holds if the stack satisfies an ``infinitesimal rigidity'' condition \cite[Theorems 1.7, 5.2]{kovacs-lieblich}. 

The main observation of this paper is to recast the two problems in the framework of Hodge theory: the boundedness problem is solved using the recent resolution of the conjecture of Griffiths by Brunebarbe-Bakker-Tsimerman \cite{bakker-brunebarbe-tsimerman}, and we give a sufficient condition for rigidity in Hodge-theoretic terms and in situations where infinitesimal Torelli holds, directly inspired by earlier results of Peters, see \cite{peters_arakelovs_1986,peters-compositio-rigidity}. When $\cM$ is a variety, boundedness has also been previously obtained in \cite[Theorem 4.2]{javanpeykar-litt-integral-points}.

Our main theorem is the following.

\begin{theorem}
\label{main-theorem}
    Let $\mathcal{S}$ be a smooth quasiprojective algebraic variety and $\mathcal{M}$ be a separated, finite type, smooth Deligne--Mumford stack which is a fine moduli space for polarized projective algebraic varieties. 
    Assume that $\cM$ admits a period map that satisfies the infinitesimal Torelli theorem, and let $H$ be its algebraic monodromy group.
    
    Then Shafarevich finiteness holds for families over $\mathcal{S}$ parameterized by $\mathcal{M}$\footnote{i.e., elements of $\mathrm{Hom}(\cS,\cM)$.} and whose algebraic monodromy group is equal to $H$.
    
    Moreover, for a smooth family $\cC\rightarrow \cT$, the number of families over $\cC_t$ for $t\in T$ satisfying the monodromy condition above is uniformly bounded in $t\in T$. 
\end{theorem}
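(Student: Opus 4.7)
The plan follows the Parshin--Arakelov paradigm as formulated by Kov\'acs--Lieblich \cite{kovacs-lieblich}: finiteness of $\Hom(\cS,\cM)_H$ reduces to two properties of $\cM$, namely \emph{boundedness} of this Hom scheme and \emph{infinitesimal rigidity} of each such map. The contribution is to obtain both from the period map $\phi:\cM \to \Gamma\backslash D$, rather than from canonical-model or moduli-of-general-type techniques.

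For boundedness, I first compose any $f\in \Hom(\cS,\cM)$ with $\phi$. Infinitesimal Torelli implies $\phi$ has injective differential, hence is quasi-finite, and the resolution of Griffiths' conjecture by Bakker--Brunebarbe--Tsimerman \cite{bakker-brunebarbe-tsimerman} ensures that its image is a quasi-projective variety. Taken together, these verify that $\cM$ is weakly bounded compactifiable in the sense of \cite{kovacs-lieblich}, so that $\Hom(\cS,\cM)$ is of finite type; restricting to the $H$-locus cuts out a union of components, still of finite type.

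For rigidity, I adapt the Hodge-theoretic argument of Peters \cite{peters_arakelovs_1986,peters-compositio-rigidity}. A first-order deformation of $f_0\in \Hom(\cS,\cM)_H$ composes with $\phi$ to yield a first-order horizontal deformation of the associated period map $\cS\to \Gamma\backslash D$, which is classified by a global section of a subbundle of $f_0^*\phi^* T_{\Gamma\backslash D}$ built from the associated polarized variation of Hodge structure $\mathbb{V}$. The sections of this subbundle are controlled by the $H$-invariants in the adjoint representation of $H$ on $\End(\mathbb{V})$, and the maximality assumption ``algebraic monodromy equals $H$'' forces these invariants to vanish. Hence the deformation is trivial at the level of the period map, and infinitesimal Torelli lifts this to rigidity of the original map $f_0$. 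Combined with boundedness, this yields Shafarevich finiteness for $\Hom(\cS,\cM)_H$.

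For the uniformity statement, I apply the same reasoning to the total space $\cC$, which is itself smooth and quasi-projective, together with a relative version of boundedness for the Hom stack $\Hom_\cT(\cC/\cT,\cT\times \cM)_H$ over $\cT$. Fiberwise rigidity plus relative finite type makes this Hom stack quasi-finite over $\cT$, producing the desired uniform bound $N$. The main obstacle will be to set up the Peters-type rigidity in the stated generality: one must identify precisely the Hodge-theoretic cohomology classifying first-order horizontal deformations of period maps, and verify that the hypothesis ``algebraic monodromy equals $H$'' always kills the relevant $H$-invariants. A secondary technical issue is ensuring that the Kov\'acs--Lieblich weakly bounded compactifiable property behaves well enough in the relative setting over $\cT$ for the uniform bound to follow.
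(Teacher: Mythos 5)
Your overall route matches the paper's: weak boundedness plus Hodge-theoretic rigidity in the Kov\'acs--Lieblich framework, with boundedness drawn from Bakker--Brunebarbe--Tsimerman and rigidity from Peters. But there is a genuine gap in the boundedness half, and an imprecision in the rigidity half.

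On boundedness: immersivity of the period map plus quasi-projectivity of its image does \emph{not} by itself verify the Kov\'acs--Lieblich notion of weak boundedness. Weak boundedness is a \emph{quantitative} statement: one must exhibit a function $b(g,d)$ bounding $\deg_C\,\xi_C^*(L)$ for \emph{every} curve $C^\circ$ of genus $g$ with $d$ punctures mapping to $\cM$, uniformly. Quasi-finiteness of $\phi$ and quasi-projectivity of the coarse moduli space $M$ (which is what BBT gives, along with ampleness of the Griffiths line bundle $L$) say nothing about a uniform degree bound for maps from curves. The missing ingredient is the Arakelov inequality $\deg_{\overline{C}}(\overline{L}) \leq C(\bV)\cdot(2g-2+d)$ (Peters, Brunebarbe), which provides exactly the function $b$. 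Without it, your boundedness claim is unproved. You also elide the compactifiability step, which in the paper goes through Kresch's theorem that a DM stack with quasi-projective coarse space is a global quotient.

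On rigidity: your phrasing ``the maximality assumption forces these invariants to vanish'' overshoots. When the algebraic monodromy is $H = G^{\der}$, the invariants $\mathfrak{g}^H$ do \emph{not} vanish in general --- they equal the Lie algebra of the center of the reductive group $G$. What vanishes is the $(-1,1)$-Hodge component of $\mathfrak{g}_\bC^{\pi_1(\cS)}$, because this invariant subspace is the center and the center is Hodge--Tate, hence purely of type $(0,0)$. Peters' identification of $T_\varphi\Hom^{||}(\cS, X_\Gamma)$ with precisely this $(-1,1)$-piece is the mechanism, and the distinction matters. You also skip the reduction step: the hypothesis is that the family has monodromy equal to $H$, the monodromy of the universal family, which a priori need not be $G^{\der}$; the paper decomposes $G^{\ad} = H^{\ad}\times H^{\fix}$ and projects away the fixed factor to reduce to $H = G^{\der}$.
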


Independently, Chen, Hu, Sun, and Zuo proved similar results in \cite{zuo-recent-paper}.

\begin{remark}
    Corollary 7.1 in \cite{bakker-brunebarbe-tsimerman} shows that a complex analytic space 
    fibered in varieties parameterized by $\mathcal{M}$ over a quasiprojective algebraic variety
    is actually algebraic. Hence all the previous results hold true if we work 
    with \emph{analytic families over an algebraic base $\cS$}.
\end{remark}

\subsection{Strategy of the proof} \label{subsection:strategy}

As explained in \Cref{main-contributions}, the proof of Theorem \ref{thm:main-hypersurfaces} has two ingredients: 
we first prove that the space of maps $\mathrm{Hom}(\mathcal S,\mathcal M)$ is of finite type over $\bC$ using a method inspired from Kov\'acs and Lieblich. 
We first show that $\cM$ is weakly bounded and compactifiable using the Arakelov inequality proved by Peters \cite{peters_arakelovs_1986}, see also Brunebarbe \cite{brunebarbe-arakelov} for a generalization, and the ampleness of the Griffiths line bundle on the coarse moduli space associated to $\mathcal{M}$ established recently in \cite{bakker-brunebarbe-tsimerman}. As for the rigidity, we use a result of Peters \cite{peters-compositio-rigidity} to analyze deformation of period maps and relate that to deformation of maps to $\mathcal{M}$. When the algebraic monodromy is maximal, i.e., equal to the monodromy of the universal family, we prove that these maps are rigid and thus $\mathrm{Hom}(\mathcal S,\mathcal M)$ is zero dimensional at those points.

\subsection{Organization of the paper} 
In \textsection \ref{section:boundedness}, we recall background from Hodge theory (\textsection \ref{subsection:hodgetheorybackground}) and state the Arakelov inequality for curves (\textsection \ref{subsection:arakelov}), which we then use to prove boundedness of the space of maps from an algebraic variety to a Deligne-Mumford stack which admits an immersive period map (\textsection \ref{subsection:boundednessoffamilies}).

In \textsection \ref{section:rigidity}, we prove rigidity of certain components in this space of maps by relating its tangent space to that of the space of period maps (\textsection \ref{subsection:rigidityoffamilies}), which can be described in Hodge theoretic terms (\textsection \ref{subsection:rigidityofperiodmaps}). We can then prove \Cref{main-theorem} in \textsection\ref{subsection:proofofmaintheorem}. 

In \textsection \ref{section:infinitesimaltorelli}, we survey known infinitesimal Torelli results so that \Cref{thm:main-hypersurfaces} follows from \Cref{main-theorem}. We then define the notion of discriminant for a family of smooth projective hypersurfaces over a quasiprojective complex curve and prove \Cref{cor:discriminant} (\textsection \ref{subsection:discriminants}).

\subsection{Acknowledgments}
We thank H\'el\`ene Esnault for her interest in our work and for helpful suggestions, Aaron Landesman for suggesting Remark 1.8, Ariyan Javanpeykar and Chris Peters for helpful comments and suggestions, and Matt Kerr for pointing out an error in Section 3.3. We thank the referee for valuable suggestions. P.E. ~ was supported by DMS-2201221. S.T.~was supported by NSF grant DMS-2302388 and NSF grant DMS-2503815. A.L.~was supported by the National Science Foundation Graduate Research Fellowship Program under Grant No.~DGE 2140743. 

\section{Boundedness results}\label{section:boundedness}

\subsection{Hodge-theoretic background} \label{subsection:hodgetheorybackground}
For more details on this section, we refer to 
\cite{voisin,green-griffiths-kerr,klingler-survey}. 

Let $\mathcal{S}$ be a smooth complex algebraic variety, 
and let $\bV=\{\bV_\Z,\cF^{\bullet},\psi\}$ be a polarized 
variation of Hodge structure of weight $k$ over $\mathcal{S}$. That is, $\bV_\Z$ is a local system of free finite rank $\Z$-modules, and
$$
    \cF^{\bullet}=\cF^k\subset\ldots\subset\cF^0=
    \bV_\Z\otimes_\Z\mathcal{O}_{\mathcal{S}}
$$
is a filtration by holomorphic subvector bundles of the flat bundle $(\cF^0,\nabla)$ associated to the local system $\bV_\Z$ by the Riemann--Hilbert correspondence, and $\psi:\bV_\Z\times \bV_\Z\rightarrow \underline{\Z}$ is a flat bilinear form that is a polarization of the pure Hodge structure $(\bV_s,\cF_s)$ for every point $s\in \mathcal{S}$. Moreover, $(\cF^0,\nabla)$ satisfies Griffiths' transversality: for all $1 \le p \le k$,
$$
    \nabla(\cF^p)\subset \cF^{p-1}\otimes \Omega^1_\cS~.
$$

For every point $s\in \mathcal{S}$, let $G_s$ be the Mumford--Tate group of the Hodge structure $(\bV_s,\cF^\bullet_s,\psi)$. It is the reductive algebraic subgroup over $\Q$ contained in $\GL(\mathbb V_{\mathbb Q, s})$ which fixes all Hodge tensors of $\mathbb{V}_{\Q,s}$. For a very general point $s$ in $\mathcal{S}$, it is isomorphic to a fixed group $G$, called the generic Mumford--Tate group of the variation. 

\begin{definition}\label{def:Hodge-generic}
    If $G$ is the generic Mumford--Tate group of a $\Z$-PVHS $\bV$ on $\cS$, and $\cS' \subset \cS$ is a subvariety, then we say that $\cS'$ is \emph{Hodge generic} in $\cS$, 
    if the generic Mumford--Tate group of $\bV|_{\cS'}$ is also $G$. 
\end{definition}

Let $H$ be the algebraic monodromy group of $\bV$ at $s$, defined as the connected component of the Zariski closure over $\Q$ of the image of the monodromy representation corresponding to the local system $\bV_\Z$: 
\[
    \rho:\pi_1(\mathcal{S},s)
    \rightarrow \mathrm{GL}(\bV_{\Q,s})~.
\] 
By a theorem of Deligne and Andr\'e \cite{hodge2,andre-fixed-part}, $H$ is a normal subgroup of $G^{\rm der}$, the derived group of $G$ for very general $s \in \cS(\bC)$.

Let $\cD$ be the Mumford--Tate domain associated to $G$. 
It can be identified with $\cD = G^{\ad}(\R)^+/U$, where $U\subset G$ is the compact 
subgroup that fixes the Hodge filtration at the basepoint
$s\in \cS$. 


There exists an arithmetic subgroup $\Gamma \subset G(\Q)$ such that the image of the monodromy representation $\rho$ is contained in $\Gamma$  and the variation 
of Hodge structure $\bV$ is completely described by its 
holomorphic period map: 
\[\varphi:\mathcal{S}\rightarrow X_\Gamma:= \Gamma \backslash \cD~. \]

\begin{example}
    The most important examples of polarized variations of Hodge structure are those of geometric origin: for 
    $\mathcal X\xrightarrow{\pi} \mathcal{S}$ a smooth projective morphism, the primitive part of the $\Z$-local system $R^{k}\pi_*\underline{\Z}_\cX$ along with the Hodge filtration determined by the $R^{k}\pi_*(\Omega^{\geq \bullet}_{\cX/\mathcal{S}})$ and the Poincar\'e pairing determines a polarized variation of Hodge structure over $\mathcal{S}$.
    These types of variations 
    (or more generally, local systems which, when restricted to a Zariski open $\cS^\circ \subset \cS$, are a summand of the primitive cohomology of a smooth projective family $\cX^\circ \to \cS^\circ$)
    are said to be of \emph{geometric origin}.
\end{example}

\subsection{Arakelov inequality} \label{subsection:arakelov}

In this section, we recall Arakelov's inequality 
which will be crucial to establishing boundedness. For history and background on Arakelov inequalities, see \cite{viehweg-arakelov-inequalities,kovacs-taji_arakelov_2024}. 

Let $\mathcal{S}$ be a smooth quasi-projective curve and 
let $\bV$ be a polarized variation of Hodge structure on 
$\mathcal{S}$ with period map $\varphi: \mathcal{S}\rightarrow X_\Gamma$.

The Griffiths line bundle $L$ on $X_\Gamma$ is defined as:
\[
    L:= \bigotimes_{p\geq 0} \det \cF^{p}.
\]

Let $\mathcal{S}\hookrightarrow \overline{\mathcal{S}}$ be a smooth compactification of $\cS$ and unipotent monodromies around the points in $\overline{\mathcal{S}}\backslash \mathcal{S}$ and let $\overline{L}$ be the Deligne extension to $\overline{\mathcal{S}}$ of the pullback of the Griffiths line bundle $\varphi^* L$. Then we have the following crucial inequality originally due to Arakelov \cite{arakelov} in the case of families of curves and generalized for arbitrary non-isotrivial variations of Hodge structure over $\mathcal{S}$, see \cite{peters-arxiv} and \cite[Corollary 1.10]{brunebarbe-arakelov}. When $\cS$ has higher dimension, see \cite{brunebarbe-arakelov}.


\begin{proposition}\label{prop:arakelov-curve}
    Let $\mathcal{S}$ be a smooth quasi-projective curve and let $\bV$ a $\Z$-polarized variation of Hodge structure over $\mathcal{S}$. Then the following inequality holds: \[
        \deg_{\overline{\mathcal{S}}}(\overline{L})\leq C(\bV)\cdot \left(-\chi(\mathcal{S})\right),
    \]
    where $\chi(\cS)$ is the topological Euler characteristic of $\cS$ and  the constant $C(\bV)$ only depends on the discrete invariants of $\bV$, that is, the Hodge numbers and their indices.
\end{proposition}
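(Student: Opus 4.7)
The plan is to translate the inequality into a statement about the $\mu$-polystable logarithmic Higgs bundle naturally associated to $\bV$ on $(\overline{\cS}, D)$, where $D := \overline{\cS}\setminus \cS$, an approach going back to Faltings and developed by Peters, Simpson, and Brunebarbe. Under the unipotent monodromy hypothesis, Schmid's nilpotent orbit theorem furnishes the Deligne canonical extension $\overline{\cF}^p \subset \overline{\cF}^0$ of each Hodge sub-bundle to $\overline{\cS}$, and the Gauss--Manin connection extends to a logarithmic connection satisfying Griffiths transversality. The graded pieces $\overline{E}^{p,q} := \overline{\cF}^p/\overline{\cF}^{p+1}$ (with $p+q=k$) then assemble into a logarithmic Higgs bundle $(\overline{E}, \theta)$ on $(\overline{\cS}, D)$ with $\theta_p: \overline{E}^{p,q} \to \overline{E}^{p-1,q+1}\otimes \Omega^1_{\overline{\cS}}(\log D)$.

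Two Hodge-theoretic inputs control this Higgs bundle. First, the polarization $\psi$ induces isomorphisms $\overline{E}^{p,q}\cong (\overline{E}^{q,p})^{\vee}$, so $\deg \overline{E}^{p,q} = -\deg \overline{E}^{q,p}$ and in particular $\mu(\overline{E}) = 0$. Second, by a theorem of Simpson (via the existence of the Hodge metric and its asymptotics), $(\overline{E}, \theta)$ is $\mu$-polystable. The per-grade degree bound is then obtained by iterating $\theta$: for $p > k/2$, the composition
\[
    \theta^{2p-k}: \overline{E}^{p,k-p} \longrightarrow \overline{E}^{k-p, p}\otimes \Omega^1_{\overline{\cS}}(\log D)^{\otimes(2p-k)},
\]
after pairing with the polarization and taking determinants, yields a morphism of line bundles
\[
    (\det \overline{E}^{p,k-p})^{\otimes 2} \longrightarrow \Omega^1_{\overline{\cS}}(\log D)^{\otimes h^{p,k-p}(2p-k)}.
\]
When this morphism is nonzero, effectivity gives $2\deg \overline{E}^{p,k-p} \leq h^{p,k-p}(2p-k)\deg \Omega^1_{\overline{\cS}}(\log D)$; when it vanishes, the kernel of the iterated Higgs field generates a proper $\theta$-stable sub-bundle of $\overline{E}$, and the slope-zero polystability inequality applied to this sub-bundle, together with the polarization duality on it, supplies a bound of the same shape.

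To conclude, using $\det \overline{\cF}^p = \bigotimes_{j\geq p}\det \overline{E}^{j,k-j}$ and the polarization identity $\deg \overline{E}^{k-j,j} = -\deg \overline{E}^{j,k-j}$ (which in particular forces $\sum_j \deg \overline{E}^{j,k-j} = 0$), one computes
\[
    \deg \overline{L} = \sum_{p=0}^k \deg \overline{\cF}^p = \sum_{j=0}^k (j+1)\deg \overline{E}^{j,k-j} = \sum_{j > k/2}(2j-k)\deg \overline{E}^{j,k-j}.
\]
Substituting the per-grade bounds and using $\deg \Omega^1_{\overline{\cS}}(\log D) = 2g - 2 + |D| = -\chi(\cS)$ produces the desired inequality
\[
    \deg_{\overline{\cS}}(\overline{L}) \leq C(\bV)\cdot (-\chi(\cS)), \qquad C(\bV) = \tfrac{1}{2}\sum_{j > k/2}(2j-k)^2 h^{j, k-j},
\]
which depends only on the weight $k$ and the Hodge numbers $h^{j,k-j}$. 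The main obstacle is the degenerate case in the per-grade step: when $\theta^{2p-k}$ vanishes identically on $\overline{E}^{p,k-p}$, the induced sub-Higgs bundle must be analyzed via polystability and its own inherited polarization. Organizing this bookkeeping so that each degenerate sub-VHS contributes an inequality with a uniformly controlled discrete constant is the technical heart of the arguments in \cite{peters-arxiv, brunebarbe-arakelov}.
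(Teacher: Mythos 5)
The paper does not prove this proposition at all: it is quoted from the literature (Arakelov for curves, \cite{peters-arxiv} and \cite[Corollary 1.10]{brunebarbe-arakelov} in general), so the comparison is really with those sources. Your strategy — Deligne canonical extension under the unipotent-monodromy normalization, the associated logarithmic Higgs bundle, degree zero plus polystability, per-grade bounds from iterating $\theta$, then the summation $\deg \overline{L}=\sum_{j>k/2}(2j-k)\deg\overline{E}^{j,k-j}$ — is exactly the approach of those references, and your bookkeeping in the non-degenerate case is correct; the constant $\tfrac12\sum_{j>k/2}(2j-k)^2h^{j,k-j}$ you obtain is consistent with the bound $\tfrac{w^2}{2}\rank(\bV)$ recorded in \Cref{explicit-bound}.

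The genuine gap is the per-grade step, and it is larger than your phrasing suggests. Taking determinants of the paired map $\overline{E}^{p,k-p}\to(\overline{E}^{p,k-p})^{\vee}\otimes\Omega^1_{\overline{\cS}}(\log D)^{\otimes(2p-k)}$ gives information only when that map is \emph{generically injective}; the determinant already vanishes whenever the map has any kernel, not only when $\theta^{2p-k}$ vanishes identically, so your dichotomy (nonzero determinant versus identically zero iterate) does not cover the typical intermediate case. Moreover, in the degenerate situation the one-line appeal to polystability does not yet produce a bound on $\deg\overline{E}^{p,k-p}$: the kernel of the iterated Higgs field sitting inside a single graded piece is not itself $\theta$-invariant, so one must pass to the sub-Higgs sheaf it generates (equivalently, over $\cS$, to the sub-PVHS it spans via semisimplicity/the theorem of the fixed part), apply the slope-$\le 0$ inequality there, and then induct on rank or level while controlling the discrete constants — this is precisely the technical content of the cited proofs, and it is the part your sketch defers rather than supplies. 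Two smaller points to make explicit if you flesh this out: the dualities $\deg\overline{E}^{p,q}=-\deg\overline{E}^{q,p}$ and $\deg\overline{E}=0$ on $\overline{\cS}$ require the polarization to extend perfectly to the canonical extension, which uses the unipotent-monodromy normalization (consistent with the paper's setup; in general one first reduces to it by a finite cover and checks both sides of the inequality scale by the degree of the cover).
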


\begin{remark}\label{explicit-bound}
    By \cite[Remark 1.7]{brunebarbe-arakelov}, we have the estimate:
    \[
        C(\bV)\leq \frac{w^2}{2}\cdot \rank(\bV),
    \]
    where $w$ is the level \footnote{By definition, if $[a,b]$ is the smallest interval such that $h^{i,n-i} = 0$ for $i \notin [a,b]$,
then the level of $\bV$ is the integer $b-a$.} of the Hodge structure $\bV$. 
\end{remark}

\subsection{Boundedness of families of algebraic varieties over $\mathcal{S}$} \label{subsection:boundednessoffamilies}

\subsubsection{Summary of Kov\'acs--Lieblich work}
Let $\cM$ be a smooth separated Deligne--Mumford stack of finite type over $\bC$ which admits a coarse moduli space that we denote by $M$. Following \cite[\S 4]{kovacs-lieblich}, we say that:
\begin{enumerate}
    \item $\cM$ is compactifiable if there exists an open immersion $\cM\hookrightarrow\cM^\dagger$ into a DM stack $\cM^\dagger$ proper over $\bC$.
    \item  $\cM$ is coarsely compactified if the coarse moduli space $M$ admits a compactification $M\hookrightarrow M^{\dagger}$, where $M^{\dagger}$ is a proper algebraic space over $\bC$.
\end{enumerate}

Assume that $M$ admits a compactification $M\hookrightarrow M^\dagger$ given by an ample line bundle $L$ over $M$. Given a function $b:\N^2\rightarrow \Z$, we say that $\cM$ is weakly bounded  with respect to $M^\dagger$ and $L$ by $b$, if for every curve $C^{\circ}$ of genus $g$ with $d$ punctures, and every morphism $\xi:C^{\circ}\rightarrow M$ factoring through a morphism $C^{\circ}\rightarrow \cM$, we have:
\[\deg \xi_C^*(L)\leq b(g,d),\] 
where $C^{\circ}\hookrightarrow C$ is a smooth compactification  and $\xi_C:C\rightarrow M^\dagger$ is the extension of the morphism $\xi$ to $C$.

Finally, we say that $\cM$ is weakly bounded, if we can find $b$ and $L$ as above such that $\cM$ is weakly bounded with respect to $M^\dagger$ and $L$ by $b$.

We have then the following theorem: 
\begin{theorem}[Theorem 1.7 in \cite{kovacs-lieblich}]\label{boundedness-theorem}
    Let $\cM$ be a weakly bounded compactifiable Deligne--Mumford stack over $\bC$. Given a morphism of algebraic spaces $\cU\rightarrow \cT$ that is smooth at infinity\footnote{See \cite[Definition 2.1]{kovacs-lieblich} for a definition}, there exists an integer $N$ such that for every geometric point $t\rightarrow \cT$, the number of deformation types of morphisms $\cU_t\rightarrow \cM$ is finite and bounded above by $N$.
\end{theorem}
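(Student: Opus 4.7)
The strategy is to realize morphisms $\cU_t \to \cM$ as points of a relative Hilbert scheme, and to bound the Hilbert polynomial of their graphs using weak boundedness. First, since $\cU \to \cT$ is smooth at infinity, I would choose a relative compactification $\cU \hookrightarrow \cU^{\dagger} \to \cT$ such that each fiber $\cU_t$ embeds as an open subset of a proper algebraic space $\cU_t^{\dagger}$. Combined with the given compactification $\cM \hookrightarrow \cM^{\dagger}$ and its coarse moduli $M \hookrightarrow M^{\dagger}$, any morphism $\xi \colon \cU_t \to \cM$ induces a rational map $\xi^{\dagger} \colon \cU_t^{\dagger} \dashrightarrow M^{\dagger}$ whose graph closure $\Gamma_\xi$ lives in $\cU_t^{\dagger} \times M^{\dagger}$.

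Next, I would bound the Hilbert polynomial of $\Gamma_\xi$ with respect to a polarization $\pi_1^* A \otimes \pi_2^* L$, where $A$ is a relatively very ample line bundle on $\cU^{\dagger}/\cT$ and $L$ is the ample line bundle on $M^{\dagger}$ witnessing weak boundedness. The coefficients of this Hilbert polynomial are the mixed intersection numbers $A^i \cdot (\xi^* L)^{d-i}$ on $\cU_t^{\dagger}$, where $d = \dim \cU_t$. For each such term, a Bertini argument produces a generic complete intersection of $d-i-1$ members of $|A|$, cutting out a smooth curve (or higher-dimensional slice) whose intersection with $\cU_t$ is mapped into $\cM$; weak boundedness applied to $\xi$ restricted to such a curve bounds $\deg(\xi^* L|_C)$ purely in terms of the arithmetic genus and number of boundary points of $C$, both controlled by the fixed numerical invariants $A^d$ and $A^{d-1} \cdot K_{\cU_t^{\dagger}}$. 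Iterating the slicing, one controls every mixed intersection number, and hence the entire Hilbert polynomial, by constants depending only on $\cU^{\dagger}/\cT$, $A$, $L$, and the weak boundedness function $b$.

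Having bounded the Hilbert polynomial uniformly in $\xi$ and in $t$, I would invoke Grothendieck's finiteness of the Hilbert scheme: all graphs $\Gamma_\xi$ fit into a single relative Hilbert scheme $\Hilb(\cU^{\dagger} \times_\cT M^{\dagger}/\cT)$ of finite type over $\cT$. The condition that a flat family member be the graph of a bona fide morphism $\cU_t \to \cM$—that is, that the first projection be an isomorphism onto $\cU_t$ and that the second projection factor through $\cM \subset \cM^{\dagger}$—is locally closed, cutting out a substack still of finite type over $\cT$. By Noetherian generic flatness on $\cT$ and a stratification argument, the number of connected components in each fiber, i.e., the number of deformation types of morphisms $\cU_t \to \cM$, is uniformly bounded by some integer $N$ independent of $t$. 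The main obstacle is the Bertini/boundary step in the second paragraph: one must guarantee that the sliced curves actually meet $\cU_t$ rather than factor through the indeterminacy locus of $\xi^{\dagger}$, and that the resulting boundary-divisor count remains bounded independently of $\xi$. This can be achieved after passing to a constructible stratification of $\cT$ on which a uniform choice of Bertini pencils can be made.
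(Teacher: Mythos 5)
The first thing to note is that the paper does not prove this statement at all: it is imported verbatim as Theorem 1.7 of Kov\'acs--Lieblich and used as a black box, so your proposal is really an attempt to reprove their theorem, and it has two genuine gaps. First, your Hilbert-scheme space parametrizes the wrong objects: the graph closure $\Gamma_\xi\subset \cU_t^\dagger\times M^\dagger$ only remembers the composite $\cU_t\to\cM\to M$ with the \emph{coarse} space, and the condition you impose (``the second projection factors through $\cM\subset\cM^\dagger$'') conflates the stack with its coarse space --- $\cM$ is not a subspace of $M^\dagger$, and a morphism $\cU_t\to M$ landing in $M$ need not lift to $\cM$, nor is a lift unique, nor do deformation types of maps to $M$ control deformation types of maps to $\cM$. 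Handling the passage from the coarse space back to the stack (via compactifiability of $\cM$ as a stack, Kresch's presentation of $\cM$ as a quotient stack admitting suitable covers, and a boundedness statement for the lifts) is precisely the delicate content of the Kov\'acs--Lieblich argument; your construction skips it.

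Second, the ``iterating the slicing'' step does not work as stated. Weak boundedness bounds $\deg(\xi^*L|_C)$ only for \emph{curves} mapping to $\cM$, so slicing by general members of $|A|$ controls the single mixed number $A^{d-1}\cdot\xi^*L$ (computed on the graph closure, where one must also check that general $|A|$-slices avoid the indeterminacy locus, as you note). To control $A^{i}\cdot(\xi^*L)^{d-i}$ for $d-i\ge 2$ your scheme would require slicing by divisors in (an extension of) $|\xi^*L|$, but the genus and boundary-point count of those slices depend on $\xi$ and are exactly what you have not yet bounded --- the argument is circular. One needs an additional input, for instance that the pullback of $L$ to the graph closure is nef, so that Khovanskii--Teissier log-concavity converts the bound on $A^{d-1}\cdot\xi^*L$ into bounds on all mixed intersection numbers and hence on the Hilbert polynomial; nefness of the extension across the boundary is precisely where the hypotheses on the compactification of $M$ must be used and argued, not assumed. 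Until both points are repaired the proposal does not establish the theorem; since the paper itself simply cites Kov\'acs--Lieblich, the efficient course is to do the same (or to follow their proof, which is organized around exactly these two difficulties).
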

Notice that if the morphism $\cU\rightarrow \cT$ is smooth, then in particular it is smooth at infinity.

\subsubsection{Deligne--Mumford stacks with immersive period maps}
In this section, we make the following additional assumption. 
\begin{assumption} \label{c:torelli}
    We assume that $\cM$ supports a polarized variation of Hodge structure $\bV$ of weight $k$ 
    and for which the period map satisfies an infinitesimal Torelli theorem, i.e.~the period map is immersive.
\end{assumption}

\begin{remark}
    The previous assumption is restrictive on $\cM$. Indeed Koll\'ar has constructed examples \cite{kollar-non-quasi-projective} of moduli spaces of algebraic varieties which do not admit immersive period maps and it is unknown in general what conditions guarantee that  the Shafarevich conjecture holds for the algebraic varieties parameterized by these families. However, it will be satisfied in all the cases that we study in this paper.
\end{remark}


\begin{theorem}
     The stack $\cM$ is weakly bounded and compactifiable. 
\end{theorem}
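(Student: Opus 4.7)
The strategy is to verify the two conditions separately, with Proposition~\ref{prop:arakelov-curve} supplying the degree bound needed for weak boundedness, and the Bakker--Brunebarbe--Tsimerman (BBT) ampleness theorem for the Griffiths line bundle supplying the ample line bundle on a compactification of the coarse space.

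First, observe that the infinitesimal Torelli hypothesis forces the period map $\Phi\colon \cM \to \Gamma\backslash\cD$ to be quasi-finite. By BBT, the image of $\Phi$ acquires a canonical quasi-projective algebraic structure $Y$, and the Griffiths line bundle $L_{\mathrm{Griff}} := \bigotimes_{p\ge 0}\det \cF^p$ is ample on $Y$. Fix a projective compactification $Y\subset \overline{Y}$ to which (a power of) $L_{\mathrm{Griff}}$ extends as an ample line bundle $L_{\overline{Y}}$, chosen so that its pullback to any curve computes the degree of the Deligne canonical extension of $(\Phi\circ\xi)^* L_{\mathrm{Griff}}$. Since the induced morphism on coarse spaces $M\to Y$ is quasi-finite, taking the normalization of $\overline{Y}$ in the function field of $M$ yields a proper algebraic space $M \subset M^\dagger$ together with a finite morphism $\overline{\Phi}\colon M^\dagger \to \overline{Y}$; then $L := \overline{\Phi}^* L_{\overline{Y}}$ is ample on $M^\dagger$. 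Compactifiability of $\cM$ as a separated DM stack of finite type over $\bC$ follows from general Nagata-type compactification results (cf.~Conrad--Lieblich--Olsson), and one arranges $\cM^\dagger$ to have $M^\dagger$ as coarse moduli.

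For weak boundedness, let $\xi\colon C^\circ \to M$ arise from a morphism $C^\circ \to \cM$, where $C^\circ$ is a smooth curve of genus $g$ with $d$ punctures. After a controlled finite \'etale base change to ensure unipotent monodromy at infinity (which alters Arakelov's constant by a fixed multiplicative factor), the composition $\overline{\Phi}\circ \xi_C \colon C \to \overline{Y}$ coincides with the period map of the pulled-back PVHS $\xi^*\bV$ extended via Deligne's canonical extension. Proposition~\ref{prop:arakelov-curve} then gives
\[
    \deg \xi_C^* L \;=\; \deg (\overline{\Phi}\circ\xi_C)^* L_{\overline{Y}} \;\leq\; C(\bV)\cdot (-\chi(C^\circ)) \;=\; C(\bV)\,(2g-2+d),
\]
so setting $b(g,d) := C(\bV)(2g-2+d)$, which depends only on the discrete invariants of $\bV$, verifies weak boundedness of $\cM$ with respect to $(M^\dagger, L)$.

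The main obstacle I anticipate is the compatibility step: one must verify that the ample line bundle $L$ on $M^\dagger$ arising from the BBT construction agrees, upon restriction to any curve, with the Deligne canonical extension to which Proposition~\ref{prop:arakelov-curve} applies. This relies on knowing that the compactification $\overline{Y}$ produced by BBT extends $L_{\mathrm{Griff}}$ via the Deligne canonical extension, rather than by some other means that could distort the degree computation; a secondary technical point is arranging the finite \'etale cover that achieves unipotent monodromy uniformly in the morphisms $\xi$ being bounded.
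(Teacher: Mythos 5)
Your proposal follows the same overall strategy as the paper (BBT for ampleness of the Griffiths bundle, the Kov\'acs--Lieblich framework, Arakelov's inequality for the degree bound), but the construction of the compactified coarse space $M^\dagger$ is more circuitous than necessary. The paper applies \cite[Corollary 7.3]{bakker-brunebarbe-tsimerman} directly to the stack $\cM$: that result already says the coarse space $M$ itself is quasi-projective with the Griffiths bundle ample on $M$, so there is no need to pass to the image $Y$ of the period map, compactify $Y$, and then normalize $\overline Y$ in $K(M)$. Your construction does work (it amounts to Zariski's main theorem applied to the quasi-finite dominant map $M\to Y$, using normality of $M$), but it rederives a portion of what BBT already hand you.

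For compactifiability of $\cM$ as a DM stack, the paper is more precise: it uses Kresch's theorem that a separated finite-type DM stack with quasi-projective coarse space is a global quotient by a linear algebraic group \cite[Theorem 4.4]{kresch-DMstacks}, which combined with \cite[Lemma 4.2]{kovacs-lieblich} and \cite[Theorem 5.3(iii)]{kresch-DMstacks} gives an open immersion into a proper DM stack. Your appeal to ``Nagata-type compactification (cf.~Conrad--Lieblich--Olsson)'' is the right idea, but that reference treats algebraic spaces rather than stacks; the precise stack-theoretic result you want is Kresch's (or Rydh's tame compactification in positive characteristic). Also note that the definition of weak boundedness does not require the coarse space of $\cM^\dagger$ to equal $M^\dagger$ --- compactifiability of the stack and weak boundedness of the coarse space are checked independently, so the ``one arranges $\cM^\dagger$ to have $M^\dagger$ as coarse moduli'' step is unnecessary.

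Finally, the compatibility concern you flag at the end --- that the ample extension $L$ on $M^\dagger$ must compute the degree of the Deligne canonical extension when pulled back to a curve --- is a genuine subtlety that the paper's proof glosses over, but it is handled internally in BBT: their Griffiths line bundle on the compactification is \emph{defined} via the canonical extension, precisely to make this compatibility hold. Your instinct to base-change \'etale to kill non-unipotent monodromy at infinity is also reasonable and is implicitly needed to apply \Cref{prop:arakelov-curve} as stated (which assumes unipotent local monodromy); the paper does not make this explicit. Your final bound $b(g,d)=C(\bV)(2g-2+d)$ agrees with the paper's $\frac{w^2}{2}\rank(\bV)(2g+d-2)$ upon inserting the explicit estimate from \Cref{explicit-bound}.
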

\begin{proof}
By \cite[Corollary 7.3]{bakker-brunebarbe-tsimerman}, the coarse moduli space $M$ is a quasiprojective algebraic variety over $\bC$ and the pullback of the Griffiths line bundle $L$ to $M$ is ample and determines a compactification $M\hookrightarrow M^\dagger$. Hence by \cite[Theorem 4.4]{kresch-DMstacks}, $\cM$ is a global quotient of a quasiprojective scheme by a linear algebraic group. This in turn implies that $\cM$ is compactifiable by \cite[Lemma 4.2]{kovacs-lieblich}, which relies on the fact that $\cM$ admits a locally closed embedding into a smooth proper DM stack with projective coarse space as shown in \cite[Theorem 5.3, (iii)]{kresch-DMstacks}.

Let $b:\N^2\rightarrow \Z$ be the function given by \[b(g,d)=\frac{w^2}{2}\cdot \rank(\bV)\cdot(2g+d-2)~.\] Let $C^{\circ}\subset C$ be a curve of genus $g$ with $d$ punctures. We have by \Cref{prop:arakelov-curve} and \Cref{explicit-bound}: 
\[\deg_{C}(\overline{L})\leq b(g,d),\]
which shows that $\cM$ is weakly bounded with respect to $L$ and $M^{\dagger}$ by $b$.
\end{proof}




\begin{corollary}
Given an algebraic variety $\cS$, there are finitely many deformation types of morphisms $\cS \to \cM$, in particular, the complex analytic orbifold $\Hom(\cS, \cM)(\bC)$ has finitely many connected components. 
\end{corollary}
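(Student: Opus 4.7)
The plan is to apply \Cref{boundedness-theorem} of Kov\'acs--Lieblich essentially as a black box. The preceding theorem has just established that $\cM$ is weakly bounded and compactifiable, so the hypotheses placed on $\cM$ in \Cref{boundedness-theorem} are satisfied. It remains to supply the data of a morphism $\cU \to \cT$ smooth at infinity; for this, I would take $\cT = \Spec \bC$ and $\cU = \cS$, viewing $\cS \to \Spec \bC$ as the structural morphism.

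Since $\cS$ is smooth and quasi-projective over $\bC$, Hironaka's theorem provides a smooth projective compactification $\cS \hookrightarrow \overline{\cS}$ whose boundary is a simple normal crossings divisor. This realizes $\cS \to \Spec \bC$ as smooth at infinity in the sense of \cite[Definition 2.1]{kovacs-lieblich}. Applied to the unique geometric point $\Spec \bC \to \cT$, \Cref{boundedness-theorem} then yields an integer $N$ bounding the number of deformation types of morphisms $\cS \to \cM$, which gives the first assertion.

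For the parenthetical statement about $\Hom(\cS, \cM)(\bC)$, I would note that two morphisms $\xi_0, \xi_1 : \cS \to \cM$ lie in the same deformation type precisely when they can be connected by an algebraic one-parameter family, i.e.~when they correspond to $\bC$-points of the same connected component of the Hom-stack $\Hom(\cS, \cM)$. The finiteness of deformation types therefore translates directly into the finiteness of the set of connected components of the complex analytic orbifold $\Hom(\cS, \cM)(\bC)$. No serious obstacle arises in this final step: the substantive work is entirely absorbed into the weak boundedness established via the Arakelov inequality and the Griffiths line bundle ampleness of \cite{bakker-brunebarbe-tsimerman}, together with the Kov\'acs--Lieblich finiteness machine; the present corollary is just a packaging of those inputs.
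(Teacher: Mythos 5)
Your proposal is correct and follows essentially the same route as the paper: both apply the Kov\'acs--Lieblich theorem (\Cref{boundedness-theorem}) to the structural morphism $\cS \to \Spec \bC$, using the weak boundedness and compactifiability of $\cM$ just established, and then translate the finiteness of deformation types into finiteness of connected components of $\Hom(\cS,\cM)$. The only cosmetic difference is that the paper simply notes that a smooth morphism is automatically smooth at infinity, whereas you spell this out via a compactification of $\cS$.
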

\begin{proof}
    Since $\cM$ is weakly bounded and compactifiable, then by \Cref{boundedness-theorem}, there are finitely many deformation types of morphisms from $\cS$ to $\cM$, hence $\Hom(\cS, \cM)$ is a finite type stack over $\bC$, and hence the complex analytic orbifold $\Hom(\cS, \cM)(\bC)$ has finitely many connected components. 
\end{proof}


\section{Rigidity results}\label{section:rigidity}

In this section, we prove that $\mathrm{Hom}(\mathcal{S},\mathcal{M})$ is zero-dimensional at the $\bC$-points with big monodromy, using rigidity results of period maps. 

\subsection{Rigidity of period maps}\label{subsection:rigidityofperiodmaps}
Let $\bV$ be $\Z$-PVHS over a smooth quasiprojective algebraic variety $\mathcal{S}$ over $\bC$, let $G$ be the generic Mumford--Tate group, and let $H$ be the monodromy group. Let \[\varphi:\mathcal{S}\rightarrow X_\Gamma:=\Gamma\backslash \cD\] be the corresponding period map. Let $T^hX_\Gamma\subseteq TX_\Gamma$ denote the Griffiths distribution. Then Griffiths transversality implies that $\varphi$ determines a point in $ \Hom^{||}(\mathcal{S},X_\Gamma)$, the space of holomorphic maps that are tangent to $T^hX_\Gamma$. By \cite[Corollary 2.6]{peters-compositio-rigidity}, the set $\Hom^{||}(\mathcal{S},X_\Gamma)$ has the structure of a finite-dimensional analytic variety\footnote{with possibly infinitely many irreducible components} and its Zariski tangent space at $\varphi$ is described by the following proposition, see \cite[(10.6)]{kerr-pearlstein}.\footnote{See also \cite[Theorem 3.2]{peters-compositio-rigidity} for earlier work.}
\begin{proposition}\label{zariski-tangent-space}
    The Zariski tangent space of $\mathrm{Hom}^{||}(\mathcal{S}, X_\Gamma)$ at $\varphi$ is given by the $(-1,1)$ part of the  Hodge structure $\mathfrak{g}_\bC^{\pi_1(\mathcal{S},s)}$, where $\mathfrak{g}\subset \mathrm{End}(V)$ is the Lie algebra of $G$.
\end{proposition}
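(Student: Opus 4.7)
The strategy is to translate the deformation problem for horizontal period maps into a Hodge-theoretic statement on the adjoint weight-zero variation $\underline{\mathfrak g}_\bC$ on $\cS$, then invoke Deligne's theorem of the fixed part. First, I would identify the tangent space to $\mathrm{Hom}(\cS, X_\Gamma)$ at $\varphi$ with holomorphic sections of $\varphi^* TX_\Gamma$. Lift $\varphi$ to $\tilde\varphi:\tilde\cS\to\cD$. Since the monodromy $\rho:\pi_1(\cS, s)\to\Gamma$ takes values in a discrete group, any first-order deformation of $\varphi$ automatically fixes $\rho$ and corresponds to an $\Ad(\rho)$-equivariant holomorphic vector field along $\tilde\varphi$. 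Using the $G^{\ad}(\R)^+$-action on $\cD = G^{\ad}(\R)^+/U$ to trivialize $T\cD$ via the quotient $\mathfrak g_\bC/F^0\mathfrak g_\bC$, such a deformation is encoded by a global holomorphic section $\eta\in H^0(\cS,\underline{\mathfrak g}_\bC/\cF^0\underline{\mathfrak g}_\bC)$.

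Next, I would impose the horizontality condition on the full family. Writing $\tilde\varphi_\epsilon = g\exp(\epsilon\eta)\cdot d_0$ for a lift $g:\tilde\cS\to G(\bC)$ of $\tilde\varphi$ and expanding the pulled-back Maurer--Cartan form gives $g_\epsilon^{-1}\,\mathrm dg_\epsilon = \omega + \epsilon([\omega,\eta] + \mathrm d\eta) + O(\epsilon^2)$. Requiring its class modulo $\cF^0\mathfrak g_\bC$ to lie in $\underline{\mathfrak g}_\bC^{-1,1}\otimes\Omega^1_{\tilde\cS}$, and decomposing by Hodge type as $\eta = \sum_{p\ge 1}\eta_{-p}$ with $\eta_{-p}\in\underline{\mathfrak g}^{-p,p}$, produces a cascade of first-order differential equations that couples successive Hodge components through Griffiths transversality of $\varphi$. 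Following the computations of Peters~\cite{peters-compositio-rigidity} and Kerr--Pearlstein~\cite{kerr-pearlstein}, the higher components $\eta_{-p}$ for $p\ge 2$ are uniquely determined by $\eta_{-1}$, and the integrability of the cascade forces $\eta_{-1}\in\underline{\mathfrak g}^{-1,1}$ to be a flat section with respect to the Gauss--Manin connection.

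Finally, I would apply the theorem of the fixed part~\cite{hodge2,andre-fixed-part}: the monodromy invariants $\mathfrak g_\bC^{\pi_1(\cS, s)}$ inherit a canonical sub-Hodge structure of weight zero inside $\mathfrak g_\bC$, and the global flat sections of the Hodge component $\underline{\mathfrak g}_\bC^{-1,1}$ over $\cS$ are exactly $(\mathfrak g_\bC^{\pi_1(\cS, s)})^{-1,1}$. Conversely, every such invariant element gives rise to a genuine first-order horizontal deformation of $\varphi$, yielding the desired identification $T_\varphi\mathrm{Hom}^{||}(\cS, X_\Gamma) = (\mathfrak g_\bC^{\pi_1(\cS,s)})^{-1,1}$. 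The main technical obstacle lies in Step 2: the bookkeeping required to solve the cascade of horizontality equations, while carefully mixing the holomorphic and $C^\infty$ structures on $\underline{\mathfrak g}_\bC/\cF^0\underline{\mathfrak g}_\bC$ and exploiting the non-integrability of the horizontal distribution, is delicate, and for the precise verification I would follow~\cite{peters-compositio-rigidity, kerr-pearlstein} closely.
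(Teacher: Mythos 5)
Your proposal reconstructs the argument that the paper itself does not reproduce: the Proposition is ``proved'' in the paper by citing \cite[(10.6)]{kerr-pearlstein} (with a footnote to \cite[Theorem 3.2]{peters-compositio-rigidity}), and your sketch --- trivialize $\varphi^*TX_\Gamma$ as $\underline{\mathfrak g}_\bC/\cF^0\underline{\mathfrak g}_\bC$ via the equivariant structure of $T\cD$ and discreteness of $\Gamma$, expand the Maurer--Cartan form, impose horizontality to produce a cascade over Hodge types forcing the $(-1,1)$-component to be flat, and close with Deligne's theorem of the fixed part --- is exactly the proof those references carry out. Two small points are worth tightening. First, the cascade does not merely ``determine'' $\eta_{-p}$ for $p\ge 2$ in terms of $\eta_{-1}$: once $\eta_{-1}$ is forced to be a flat section of pure Hodge type $(-1,1)$, the higher-graded pieces of $\eta$ must vanish outright (a flat section of pure type $(-1,1)$ has no $(-p,p)$-component for $p\ge 2$), and this vanishing is what makes $\eta\mapsto\eta_{-1}$ an isomorphism onto $(\mathfrak g_\bC^{\pi_1(\cS,s)})^{-1,1}$ rather than merely a surjection. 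Second, your trivialization of $T\cD$ uses $G^{\ad}$, while the statement refers to $\mathfrak g=\mathrm{Lie}(G)$; since $G$ is reductive with Hodge--Tate center the two have the same $(-1,1)$-part, so the conclusion is unaffected, but the point should be stated. You rightly flag the cascade bookkeeping in Step 2 as the delicate heart of the matter and defer its verification to \cite{peters-compositio-rigidity,kerr-pearlstein}, which is appropriate given that the paper does the same.
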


\begin{remark}
    Since the geometric monodromy group $H$ is the connected component of the closure of $\rho(\pi_1(\cS))$ in the Zariski topology, we have \[\mathfrak{g}^{\pi_1(\mathcal{S})} = \mathfrak{g}^{\overline{\pi_1(\mathcal{S})}} \subseteq \mathfrak{g}^H~.\]
\end{remark}

\begin{proposition} \label{prop:Hrigidity}
    Keep the same notation as above. If $H=G^{\der}$, the derived group of $G$, then the period map of $\cS$ is horizontally rigid.
\end{proposition}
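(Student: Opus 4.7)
My plan is to unwind horizontal rigidity into a purely Lie-theoretic statement using Proposition~\ref{zariski-tangent-space}, and then verify it under the hypothesis $H = G^{\der}$. By that proposition, the Zariski tangent space to $\mathrm{Hom}^{||}(\cS, X_\Gamma)$ at $\varphi$ equals $(\mathfrak{g}_\bC^{\pi_1(\cS,s)})^{-1,1}$, so it suffices to show this space vanishes. By the remark immediately following the proposition, $\mathfrak{g}^{\pi_1(\cS,s)} \subseteq \mathfrak{g}^H$, so under the hypothesis $H = G^{\der}$ the problem reduces to proving $(\mathfrak{g}_\bC^{G^{\der}})^{-1,1} = 0$.

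I would then carry out a short invariants computation. Since $G$ is reductive, its Lie algebra decomposes as a $G$-representation $\mathfrak{g} = \mathfrak{z}(\mathfrak{g}) \oplus \mathfrak{g}^{\der}$, with $\mathfrak{g}^{\der}$ semisimple and $\Ad(G)$-stable. The $G^{\der}$-invariants of the semisimple summand consist of those $X \in \mathfrak{g}^{\der}$ that commute with all of $\mathfrak{g}^{\der}$, i.e.~the center of $\mathfrak{g}^{\der}$, which is zero; on the other hand $\mathfrak{z}(\mathfrak{g})$ is fixed pointwise by $\Ad(G)$. This yields $\mathfrak{g}^{G^{\der}} = \mathfrak{z}(\mathfrak{g})$.

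Finally, I would identify the Hodge type of $\mathfrak{z}(\mathfrak{g})$. The weight-zero Hodge structure on $\mathfrak{g}$ at the basepoint $s$ is induced from the Hodge structure on $\bV_s$ through the inclusion $\mathfrak{g} \subset \End(\bV_s)$; equivalently, its associated grading is given by the adjoint action of the cocharacter $h_s \colon \bS \to G_\R$ defining the Hodge structure on $\bV_s$, which factors through $G$ by the very definition of the Mumford--Tate group. Since $\Ad$ acts trivially on $\mathfrak{z}(\mathfrak{g})$, the resulting grading on $\mathfrak{z}(\mathfrak{g})_\bC$ is concentrated in bidegree $(0,0)$, so $\mathfrak{z}(\mathfrak{g})_\bC^{-1,1} = 0$. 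Combining, $(\mathfrak{g}_\bC^{\pi_1(\cS,s)})^{-1,1} \subseteq \mathfrak{z}(\mathfrak{g})_\bC^{-1,1} = 0$, which is horizontal rigidity. I do not anticipate a serious obstacle here: the core of the argument is a two-line Lie-theoretic calculation, and the only subtlety is checking that $\mathfrak{g}$ actually inherits a sub-Hodge-structure from $\End(\bV_s)$, which follows from the Mumford--Tate property.
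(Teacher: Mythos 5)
Your proof is correct and follows the same route as the paper's: reduce to the $(-1,1)$ part of $\mathfrak{g}^{G^{\der}}$ via Proposition~\ref{zariski-tangent-space} and the inclusion $\mathfrak{g}^{\pi_1(\cS)} \subseteq \mathfrak{g}^H$, identify $\mathfrak{g}^{G^{\der}}$ with $\mathfrak{z}(\mathfrak{g})$ using reductivity, and observe that $\mathfrak{z}(\mathfrak{g})$ is purely of type $(0,0)$ because the adjoint action of $h_s$ is trivial on the center. You have simply spelled out the two steps that the paper compresses into a single sentence.
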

\begin{proof}
    By \Cref{zariski-tangent-space}, the Zariski tangent space of horizontal deformations of $\varphi$ is equal to the $(-1,1)$ part of $
    \mathfrak{g}^{\pi_1(\mathcal{S})}$. 
    Since $G$ is reductive, the invariant part $
    \mathfrak{g}^{\pi_1(\mathcal{S})}$ is purely of type $(0,0)$, equal to the Lie algebra of its center, which is Hodge--Tate. Therefore, the $(-1,1)$ part is trivial.
\end{proof}

\subsection{Rigidity of families parameterized by a DM stack}\label{subsection:rigidityoffamilies}

Let $\mathcal M$, $M$, and $\bV$ be as in \Cref{subsection:boundednessoffamilies}.
Let $f:\mathcal{S}\rightarrow \mathcal M$ be a non-constant morphism, let $\varphi: M \rightarrow X_\Gamma$ be the period map for the polarized variation of Hodge structure $\bV$, and let $\varphi \circ f:\mathcal{S} \rightarrow X_\Gamma$ be the composition.

The following proposition is a consequence of assuming the infinitesimal Torelli property. 

\begin{proposition}\label{prop:injectivetangentspace}
With Assumption \ref{c:torelli} and notation as above, the map on Zariski tangent spaces induced from the natural map \[\Hom(\mathcal{S},\mathcal M)\rightarrow \Hom^{||}(\mathcal{S},X_\Gamma)\] is injective. 
\end{proposition}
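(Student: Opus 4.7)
The plan is to identify both tangent spaces explicitly as spaces of global sections of pullback tangent sheaves, write the induced map as the global sections of the differential of $\varphi$ pulled back to $\cS$, and then deduce injectivity from the infinitesimal Torelli hypothesis together with left exactness of $H^0$.

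First I would recall the standard identifications. Since $\cM$ is a smooth separated DM stack and $\cS$ is a smooth variety, infinitesimal deformations of a morphism $f:\cS\rightarrow \cM$ are classified by
\[
    T_f\Hom(\cS,\cM) \;\cong\; H^0\!\left(\cS,\, f^*T_{\cM}\right).
\]
Analogously, by \cite[Corollary 2.6]{peters-compositio-rigidity} (or by the description recalled in \Cref{zariski-tangent-space}), the Zariski tangent space to the analytic variety $\Hom^{||}(\cS,X_\Gamma)$ at $\varphi\circ f$ is the space of horizontal infinitesimal deformations, and is identified with
\[
    T_{\varphi\circ f}\Hom^{||}(\cS,X_\Gamma) \;\cong\; H^0\!\left(\cS,\, (\varphi\circ f)^*T^hX_\Gamma\right),
\]
where $T^hX_\Gamma\subset TX_\Gamma$ is the Griffiths horizontal distribution. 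Under these identifications, the map induced by post-composition with $\varphi$ is exactly $H^0$ applied to the pullback along $f$ of the differential $d\varphi:T_{\cM}\rightarrow \varphi^*T^hX_\Gamma$.

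Next I would use Assumption \ref{c:torelli}. Infinitesimal Torelli means that $d\varphi$ is pointwise injective on every tangent space of $\cM$, i.e.\ it is an injective map of locally free sheaves whose cokernel is locally free (a subbundle inclusion). Pullback of vector bundles along $f$ preserves subbundle inclusions, so
\[
    f^*d\varphi : f^*T_{\cM}\;\into\;(\varphi\circ f)^*T^hX_\Gamma
\]
is an injection of locally free $\cO_\cS$-modules. Applying the left-exact functor $H^0(\cS,-)$ then yields an injection on global sections, which is precisely the induced map on Zariski tangent spaces.

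There is essentially no serious obstacle, but the one point requiring some care is verifying the identifications of tangent spaces in the stacky/analytic setting: one must check that the deformation functor of maps $\cS\to\cM$ is really controlled by $f^*T_\cM$ even when $\cM$ has nontrivial automorphisms, and that the horizontal tangent description of $T\Hom^{||}(\cS,X_\Gamma)$ from \cite{peters-compositio-rigidity,kerr-pearlstein} is compatible with the $H^0$ description above. Once these identifications are in place, the argument reduces to the trivial fact that $H^0$ preserves injections of locally free sheaves.
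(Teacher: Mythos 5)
Your proposal is correct and is essentially the paper's argument in dual form: the paper identifies $T_f\Hom(\cS,\cM)$ with $\Hom_{\cO_{\cS}}(f^*\Omega_{\cM/\bC},\cO_{\cS})$ and uses that immersivity makes $d\varphi:\varphi^*\Omega^h_{X_\Gamma/\bC}\to\Omega_{\cM/\bC}$ surjective, so that precomposition is injective on Hom-spaces --- which is exactly your tangent-bundle/left-exactness argument dualized. The one extra point your route needs, namely that infinitesimal Torelli gives a fiberwise injection (a subbundle inclusion) whose injectivity survives pullback along $f$, you have correctly flagged, so there is no gap.
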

\begin{proof} 
    Fix an element $f \in \Hom(\mathcal{S},\mathcal M)$. 
We have an induced map on Zariski tangent spaces: 
    \begin{align}\label{tangent-map}
        T_f \Hom_{\bC}(\cS, \cM) \to  T_{\varphi \circ f }\Hom^{||}_{\bC}(S,X_\Gamma).
    \end{align}
The Zariski tangent space of $\Hom_{\mathbb C} (\mathcal{S}, \cM)$ is given by \[\Hom_{\mathcal{O}_{\mathcal{S}}}(f^* \Omega_{\cM/\bC}, \cO_{\mathcal{S}})~,\] see \cite[Lemma 1, p.1294]{arakelov},  and the map on tangent spaces induced by composing with ${\varphi}$ is deduced in the diagram below by precomposing with the natural map $d {\varphi}: {\varphi}^* \Omega^h_{X_\Gamma/\bC} \to \Omega_{\cM/\mathbb C}$ 
\[
\begin{tikzcd}
{ T_f \Hom_{\mathbb C} (\mathcal{S}, \cM)} \arrow[r, equal]            & {\Hom_{\mathcal{O}_{\mathcal{S}}}(f^* \Omega_{\cM/\bC}, \cO_{\mathcal{S}})} \arrow[d]         \\
{ T_{{\varphi} \circ f} \Hom_{\mathbb C}^{||} (\mathcal{S}, X_{\Gamma})} \arrow[r, equal] & {\Hom_{\mathcal{O}_{\mathcal{S}}}(f^*{\varphi}^* \Omega^h_{X_{\Gamma} /\bC}, \cO_{\mathcal{S}})}
\end{tikzcd}
\]
where $\Omega^h_{X_\Gamma/\bC}$ is the quotient by the annihilator of the horizontal distribution on $X_\Gamma$. Assumption \ref{c:torelli} states that $\varphi$ is immersive, hence $d \varphi: {\varphi}^* \Omega_{X_\Gamma /\bC} \to \Omega_{\cM/\mathbb C}$ is surjective, and so we conclude the map (\ref{tangent-map}) is injective. 
\end{proof}

\begin{corollary}\label{zero-dim}
    The connected components of $\mathrm{Hom}(\mathcal{S},\mathcal M)$ corresponding to families with monodromy group equal to $G^{\der}$ are zero-dimensional.
\end{corollary}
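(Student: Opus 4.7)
The plan is to combine Propositions \ref{prop:Hrigidity} and \ref{prop:injectivetangentspace} directly: infinitesimal Torelli lifts the horizontal rigidity of the period map to infinitesimal rigidity of the map into the moduli stack. This is a short deduction once both inputs are in place, so there is no new content — the work has already been front-loaded into those two propositions.

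Concretely, I would fix a connected component $\cC \subset \Hom(\cS, \cM)$ whose points correspond to maps $f : \cS \to \cM$ for which the pulled-back $\Z$-PVHS $f^*\bV$ has algebraic monodromy group $H$ equal to $G^{\der}$, where $G$ is the generic Mumford--Tate group of $f^*\bV$. Pick any $\bC$-point $f \in \cC$. The composition $\varphi \circ f : \cS \to X_\Gamma$ is horizontal by Griffiths transversality, so it defines a point of $\Hom^{||}(\cS, X_\Gamma)$. Proposition \ref{prop:Hrigidity} applied to $f^*\bV$ gives
\[
    T_{\varphi \circ f} \Hom^{||}(\cS, X_\Gamma) = 0,
\]
and Proposition \ref{prop:injectivetangentspace} supplies an injection
\[
    T_f \Hom(\cS, \cM) \hookrightarrow T_{\varphi \circ f} \Hom^{||}(\cS, X_\Gamma).
\]
Together these force $T_f \Hom(\cS, \cM) = 0$, so $f$ is an isolated reduced point of $\Hom(\cS, \cM)$. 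Since this holds at every $\bC$-point of $\cC$, the component $\cC$ is zero-dimensional.

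The only point that deserves some care — and which I would treat as the main (but mild) obstacle — is making precise that the hypothesis ``monodromy equals $G^{\der}$'' is something that can sensibly be attached to a connected component. Since the underlying local system $f^*\bV_\Z$ is locally constant in $f$ over each component, the monodromy representation $\rho_f$ is constant up to conjugation on $\cC$, so both $G$ and $H$ (which are defined from $\rho_f$ and from the Hodge tensors) are constant on $\cC$. Thus the argument above applies uniformly to every point of $\cC$, and the corollary follows.
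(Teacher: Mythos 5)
Your proof is correct and follows essentially the same route as the paper: apply \Cref{prop:injectivetangentspace} to inject $T_f \Hom(\cS,\cM)$ into $T_{\varphi\circ f}\Hom^{||}(\cS,X_\Gamma)$, then use \Cref{prop:Hrigidity} (via \Cref{zariski-tangent-space}) to kill the target. Your closing remark justifying that ``monodromy equals $G^{\der}$'' is well-defined on a connected component is a reasonable elaboration the paper leaves implicit.
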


\begin{proof}
Let $f:\mathcal{S}\rightarrow \mathcal M$ be a non-constant algebraic map. Let $\varphi : \cM \rightarrow X_\Gamma$ be the period map. By Assumption \ref{c:torelli}, $\varphi$ is immersive. The map $f\rightarrow \varphi\circ f$ sends $ \mathrm{Hom}(\mathcal{S},\mathcal M)$ to the sublocus $\Hom^{||}(\mathcal{S},X_\Gamma)$ of maps which are tangent to the Griffiths distribution. By \Cref{prop:injectivetangentspace}, we have that the differential of map $\varphi$ induces an injection of tangent spaces: 
\[
    T_f \Hom(\mathcal S, \mathcal M) \hookrightarrow T_{\varphi\circ f}\mathrm{Hom}^{||}(\mathcal{S},X_\Gamma).
\]

By \Cref{zariski-tangent-space}, the space  $T_{\varphi\circ f}\mathrm{Hom}^{||}(\mathcal{S},\Gamma\backslash \cD)$ is identified with the $(-1,1)$-part of the Hodge structure $\mathfrak{g}_\bC^{\pi_1(\cS)}$.
Now we may apply \Cref{prop:Hrigidity} to conclude that
\[
    T_{\varphi\circ f}\mathrm{Hom}^{||}(\mathcal{S},\Gamma\backslash \cD)=0,
\]
and so the Zariski tangent space to $\mathrm{Hom}(\mathcal{S},\mathcal M)$ at $f$ is zero. Hence the result.
\end{proof}

\subsection{Proof of \Cref{main-theorem}}\label{subsection:proofofmaintheorem}

Let $G$ be the generic Mumford-Tate group and let $H$ the monodromy group underlying the variation of Hodge structure on $\cM$. We have a decomposition of adjoint groups over $\Q$: $G^{\ad}=H^{\ad}\times H^{\fix}$, which induces a decomposition of period domains $\cD=\cD_H\times \cD_{fix}$. Moreover, the period map also decomposes, up to a finite \'etale cover of $\cM$, as: 
\[\Phi:\cM \rightarrow  \Gamma_H\backslash \cD_H \times \{*\}\]  where the first projection still satisfies infinitesimal Torelli and is rigid. Therefore, we can assume without loss of generality that $H=G^{\der}$. Let $f: \mathcal S \to \mathcal M$ be a moduli map corresponding to a family parametrized by $\cS$ and such that its monodromy group is equal to $H=G^{\der}$. By \Cref{prop:Hrigidity} and \Cref{prop:injectivetangentspace}, the map $f$ is rigid. Combining this rigidity result with the boundedness result of \Cref{subsection:boundednessoffamilies} concludes the proof of the finiteness proof of \Cref{main-theorem}.

If $\cS$ is a fiber of a family $\cC\rightarrow \cT$ smooth at infinity\footnote{See \cite[Definition 2.1]{kovacs-lieblich} for a definition}, then  \Cref{boundedness-theorem} implies that the finiteness is uniform in the fibers of the family $\cC\rightarrow \cT$. In particular, if $\mathcal M_{g,d}$ is the moduli space of genus $g$ curves with $d$ punctures and $3$-level structure and $\cC_{g,d} \to \cM_{g,d}$ denote the universal family, then $\cC_{g,d} \to \cM_{g,d}$ is smooth, hence smooth at infinity in the sense of \cite[Definition 2.1]{kovacs-lieblich} and the uniform finiteness follows by \Cref{boundedness-theorem}.

\begin{remark}\label{monodromy-is-enough}
   If the generic Mumford--Tate group $G$ is simple, then any non-isotrivial Hodge generic family will automatically have maximal monodromy, by Deligne--Andr\'e's theorem. More generally, rigidity holds whenever the algebraic monodromy of $
\cS$ acts absolutely irreducibly on the Lie algebra of the monodromy of the universal family. 
\end{remark}

\section{Examples of Infinitesimal Torelli}\label{section:infinitesimaltorelli}

In this section, we state some known results in the literature about moduli spaces of hypersurfaces and smooth complete intersections so that we can deduce \Cref{thm:main-hypersurfaces} from \Cref{main-theorem}.

For $n \ge 1$ and $d \ge 3$, let $\cU_{n,d}$ denote the moduli space of smooth hypersurfaces of degree $d$ in $\mathbb{P}^{n+1}$. Then we have a universal family $\mathcal{H}_{n,d}\rightarrow \cU_{n,d}$ of smooth hypersurfaces and a period map $\varphi:\mathcal U_{n,d}(\bC)\rightarrow X_\Gamma$ where $\cD$ is the period domain parameterizing Hodge structures of weight $n$ on the middle cohomology of a hypersurface of degree $d$ and dimension $n$. 
\begin{proposition}\label{prop:hypersurfacetorelli}
  Let $d \ge 3$ and $n \ge 1$. 
   Then $\cU_{n,d}$ is a smooth separated Deligne--Mumford  stack over $\bC$. If moreover $(n,d) \neq (2,3)$, the period map $\varphi:\cU_{n,d}\rightarrow X_\Gamma$ is immersive. 
\end{proposition}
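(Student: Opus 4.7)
The plan addresses the two assertions separately, combining classical results.

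For the DM stack structure, the standard approach is to realize $\cU_{n,d}$ as the quotient stack $[V_{n,d}/\PGL_{n+2}]$, where $V_{n,d} \subset \bP(H^0(\bP^{n+1}, \cO(d)))$ is the Zariski open complement of the discriminant, parameterizing smooth degree-$d$ hypersurfaces. Since the discriminant is a proper closed subvariety, $V_{n,d}$ is smooth and quasi-projective, and the quotient stack is smooth of finite type. The Deligne--Mumford property reduces to finiteness of stabilizers, which is a classical theorem of Matsumura--Monsky: for $d \ge 3$, every automorphism of a smooth hypersurface of degree $d$ in $\bP^{n+1}$ is induced by an element of $\PGL_{n+2}$, and the stabilizer of each such hypersurface is finite. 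Separatedness follows from Mumford's GIT: smooth hypersurfaces of degree $\geq 3$ are GIT-stable for the $\PGL_{n+2}$-action, so the good quotient exists as a separated algebraic space, whence $\cU_{n,d}$ is separated.

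For the immersiveness of the period map, I would appeal to the classical infinitesimal Torelli theorem for hypersurfaces, due originally to Griffiths and refined by Peters, Donagi, and Flenner, among others. The key tool is Griffiths' Jacobian ring description. For $X = V(f) \subset \bP^{n+1}$ smooth with $\deg f = d$, the Jacobian ring
\[ R(f) = \bC[x_0, \ldots, x_{n+1}]/J(f) \]
(where $J(f)$ is the ideal of partial derivatives of $f$) is a graded Artinian Gorenstein ring with one-dimensional socle in degree $N := (n+2)(d-2)$. Griffiths provided canonical isomorphisms
\[ T_{[X]}\cU_{n,d} \cong H^1(X, T_X)_{\mathrm{prim}} \cong R(f)_d, \qquad H^{n-p,p}_{\mathrm{prim}}(X) \cong R(f)_{(p+1)d - n - 2}, \]
under which the differential of the period map is induced by multiplication in $R(f)$. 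Injectivity of the differential therefore amounts to showing that, for every nonzero $g \in R(f)_d$, multiplication by $g$ from some admissible graded piece $R(f)_{a_p}$, with $a_p := (p+1)d - n - 2$, to $R(f)_{a_p + d}$ is nonzero. A short numerical argument based on Macaulay duality on $R(f)$ reduces this to the existence of an integer $p \in \{0, \ldots, n\}$ for which $a_p$ and $a_p + d$ both lie in the range $[0, N]$; this is possible in every case except $(n,d) = (2,3)$, where the primitive Hodge structure is entirely of type $(1,1)$, the differential of the period map vanishes identically, and the period map is constant.

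The main obstacle lies in the careful injectivity analysis in a few boundary cases---for instance Calabi--Yau-type hypersurfaces ($d = n+2$, where $h^{n,0} = 1$) or low-degree plane curves, where one must rule out an accidental kernel of the multiplication map. Since these are all handled thoroughly in the classical literature on infinitesimal Torelli for hypersurfaces, the plan is to cite the relevant theorems rather than reprove them from scratch.
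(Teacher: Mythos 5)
Your proposal is correct and takes essentially the same route as the paper, whose proof is purely a citation: it refers to Javanpeykar--Loughran for the smooth separated DM quotient-stack structure of $\cU_{n,d}$ (exactly your $[V_{n,d}/\PGL_{n+2}]$ plus Matsumura--Monsky finiteness and GIT-stability argument) and to Flenner's infinitesimal Torelli theorem for immersiveness, whose hypersurface case is precisely the Griffiths Jacobian-ring/Macaulay-duality argument you sketch, with $(n,d)=(2,3)$ the unique exception. One minor caution: your phrasing of Matsumura--Monsky is slightly too strong (for $(n,d)=(1,3)$ or $(2,4)$ not every abstract automorphism is linear), but this does not affect the argument, since the Deligne--Mumford and separatedness properties only require finiteness of the $\PGL_{n+2}$-stabilizers, which is what the theorem gives.
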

\begin{proof}
    For the construction and properties of the stack, see \cite[\S 3.1]{javanpeykar2017moduli} for an overview. See the below result of Flenner for the period map being immersive. 
\end{proof}

More generally, given $\underline{d}=(d_1,\ldots,d_r)$ integers $\geq 2$, let $\mathcal{I}_{\underline{d}}$ denote the moduli space of smooth complete intersection of dimension $n$ in $\mathbb{P}_\bC^{n+r}$. Then $\mathcal{I}_{\underline{d}}$ is a smooth DM separated stack by \cite[Th\'eor\`eme 1.6]{benoist-DM-complete-intersections} and we have the following by \cite[Theorem 3.1]{flenner}.
\begin{proposition}\label{Torelli} The period map for $\mathcal{I}_{\underline{d}}$ is immersive, except for:
    \begin{enumerate}
        \item cubic surfaces;
        \item even-dimensional smooth complete intersection of two quadrics.
    \end{enumerate}
\end{proposition}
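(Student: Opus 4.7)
The plan is to follow Flenner's approach via Griffiths' residue calculus generalised to complete intersections. Fix a smooth complete intersection $X = V(f_1, \ldots, f_r) \subset \mathbb{P}^{n+r}$ of multidegree $\underline d = (d_1,\ldots,d_r)$, and write $S = \bC[x_0,\ldots,x_{n+r}]$. The first step is to identify the tangent space of $\mathcal{I}_{\underline d}$ at $[X]$ with $H^1(X, T_X)$, and to recall that the differential of the period map at $[X]$ is given, via Kodaira--Spencer, by the direct sum of cup-product maps
$$H^1(X, T_X) \longrightarrow \bigoplus_{p} \Hom\bigl(H^{p,n-p}(X)_{\mathrm{prim}},\, H^{p-1,n-p+1}(X)_{\mathrm{prim}}\bigr).$$
Infinitesimal Torelli is then the injectivity of this total cup-product map.

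Next, I would translate everything into the graded Jacobian ring. Using the conormal sequence $0 \to N^\vee_{X/\bP^{n+r}} \to \Omega^1_{\bP^{n+r}}|_X \to \Omega^1_X \to 0$ together with the Euler sequence and Bott vanishing on projective space, one shows as in Griffiths' work that $H^{p,n-p}(X)_{\mathrm{prim}}$ is computed by a specific graded piece $R^{a_p}$ of the bigraded Jacobian ring $R = S/J$, where $J$ is the ideal generated by the $f_i$'s together with the maximal minors of the Jacobian of $(f_1,\ldots,f_r)$. Under this identification the cup-product map becomes a multiplication map
$$\mu_p \colon R^{e} \otimes R^{a_p} \longrightarrow R^{a_{p-1}},$$
where $e$ is the natural degree of $H^1(X, T_X)$ inside $R$ and $a_p = \sum_i d_i - n - r - 1 + p(\,\cdot\,)$ is determined by the multidegree. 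The infinitesimal Torelli statement reduces to the injectivity of the adjoint map $R^e \hookrightarrow \bigoplus_p \Hom(R^{a_p}, R^{a_{p-1}})$.

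The key technical input is then Macaulay's theorem on the duality of the (Artinian) Jacobian ring of a complete intersection, combined with a Koszul/symmetriser style argument: Macaulay's theorem implies $R$ is Gorenstein with socle in a known top degree, so multiplication pairings $R^a \otimes R^{N-a} \to R^N$ are perfect. A direct degree check shows that $a_{p_0}$ can be chosen in the ``middle'' range so that multiplication by $R^e$ is injective, provided the numerical range $(a_p, e)$ is non-degenerate. The main obstacle — and the source of the two exceptions in the statement — is exactly this degree check. One finds that the relevant Hodge piece $H^{p,n-p}_{\mathrm{prim}}$ collapses to zero precisely for (i) cubic surfaces ($n=r=1$, $d=3$), where $H^{2,0} = 0$ and all primitive $H^{1,1}$ is algebraic, and (ii) even-dimensional intersections of two quadrics ($r=2$, $d_1=d_2=2$, $n$ even), where the Hodge structure is of level zero (concentrated in middle bidegree). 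In all other cases, an explicit inspection of Hodge numbers together with Macaulay duality delivers the injectivity, giving the immersivity of the period map.

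Finally, I would verify that the two exceptions are genuine: for cubic surfaces primitive $H^2$ is pure of type $(1,1)$ so the period map is constant, and for even-dimensional intersections of two quadrics the period map factors through the moduli of the associated hyperelliptic curve (or binary form) via the Reid--Donagi construction and thus fails to be an immersion on $\mathcal{I}_{(2,2)}$.
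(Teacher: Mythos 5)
The paper offers no argument for this proposition: it is stated as a direct citation of Flenner's Theorem~3.1 (after noting, via Benoist, that $\mathcal{I}_{\underline d}$ is a smooth separated DM stack). So there is no in-paper proof to compare against. Your sketch is broadly in the spirit of what underlies that citation---a Griffiths-type residue calculus reducing infinitesimal Torelli to an algebraic statement about a graded Artinian ring, plus a Macaulay/Gorenstein duality argument---although Flenner's criterion itself is phrased as a Koszul-cohomological vanishing statement for sections of vector bundles rather than via an explicit Jacobian ring.

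There are, however, several concrete problems with the sketch as written.
\begin{enumerate}
\item The numerics for cubic surfaces are wrong: a cubic surface is a hypersurface in $\mathbb{P}^3$, so $n=2$, $r=1$, $d=3$, not $n=r=1$ (which would be a plane cubic curve).
\item The Reid--Donagi hyperelliptic-curve picture you invoke to explain the even-dimensional $(2,2)$ exception is misplaced. That picture applies to \emph{odd}-dimensional intersections of two quadrics, whose intermediate Jacobian is the Jacobian of the associated hyperelliptic curve---and for those, infinitesimal Torelli in fact holds, so they are not among the exceptions. The even-dimensional $(2,2)$'s fail because the primitive middle cohomology is of Hodge--Tate type (level zero), so the period map is constant; no factoring argument is needed or relevant. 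Relatedly, describing both exceptions as ``$H^{p,n-p}_{\mathrm{prim}}$ collapses to zero'' is imprecise: the accurate statement is that the Hodge structure has level $\le 0$, so the target of $d\varphi$ is zero.
\item More seriously, the ring $R=S/J$ you define---with $J$ generated by the $f_i$ together with the maximal minors of the Jacobian matrix---cuts out the singular scheme of $V(f_1,\dots,f_r)$, and is \emph{not} the object that computes primitive Hodge cohomology of a complete intersection in the Griffiths/Konno/Terasoma calculus. The standard object is the bigraded Jacobian ring of the single master polynomial $F=\sum_i y_if_i$ obtained via the Cayley trick. It is not automatic (and I do not believe it is true in general) that your $R$ is Gorenstein, or that its graded pieces match $H^{p,q}_{\mathrm{prim}}(X)$. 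If you want to run the argument via a Jacobian ring rather than Flenner's Koszul criterion, this setup needs to be corrected before the Macaulay-duality step can be invoked, and the degree bookkeeping (the integers $e$ and $a_p$) must then be redone in the bigraded setting.
\end{enumerate}
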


For the moduli space of cubic surfaces, even though the period map of relative middle cohomology is trivial, the \emph{occult period map} constructed in \cite{allcock-carlson-toledo-2002} provides a period map which satisfies infinitesimal Torelli.

\begin{proof}
    [Proof of \Cref{thm:main-hypersurfaces} and \Cref{thm:main-complete-intersection}]
    By \Cref{prop:hypersurfacetorelli} and \Cref{Torelli}, we can take $\cM = \cU_{n,d}$ and $\cM=\cI_{\underline d}$ in \Cref{main-theorem}. By Beauville \cite[Th\'eor\`emes 2,4,5,6]{beauville-monodromie}\footnote{See also \cite[Corollary 18]{peters-steenbrink-monodromy}.}, the generic Mumford--Tate group is $\Q$-simple, therefore its derived group is equal to the algebraic monodromy and acts absolutely irreducibly. For cubic surfaces, we use the period map from \cite{allcock-carlson-toledo-2002} which satisfies infinitesimal Torelli and it results from \cite[Theorem 2.14]{allcock-carlson-toledo-2002} that the generic Mumford--Tate group of the family is equal to its monodromy.  This proves \Cref{thm:main-hypersurfaces} and \Cref{thm:main-complete-intersection}.
\end{proof}

\subsection{Discriminants and Proof of \Cref{cor:discriminant}}\label{subsection:discriminants}

Let $\cS$ be a quasi-projective curve over $\bC$. Fix $n,d \ge 1 $, and let $\cU_{n,d}$ be the moduli space of smooth hypersurfaces of degree $d$ in $\bP^{n+1}$. A non-isotrivial family of such hypersurfaces over $\cS$ corresponds to a nonconstant map $f: \cS \to \cU_{n,d}$. 
Recall that $\cU_{n,d} = [(\bP^{N(n,d)} -D_{n,d})/\PGL_{n+2}]$, where $\bP^{N(n,d)}$ is the parameter space for all degree $d$ homogeneous polynomials in $n+2$ variables, and $D_{n,d}$ is the locus of homogeneous polynomials corresponding to singular hypersurfaces (i.e.~the discriminant locus).

Any choice of coordinates on $\bP^{n+1}$ determines a lift $g: \cS \to \bP^{N(n,d)}$ of $f$. Any two lifts will differ by an $\PGL_{n+2}$-automorphism. 

Fix a smooth compactification $\cS \hookrightarrow \overline{\cS}$. There is a unique way to extend $g$ to $\overline g: \overline{\cS} \to \bP^{N(n,d)}$. By uniqueness of the extension, the extensions $\overline{g}_1, \overline{g}_2$ of any two choices of lifts $g_1,g_2$ differing by $\sigma \in \PGL_{n+2}$ will also differ by $\sigma$. 

Since the $\PGL_{n+2}$-action is invariant on $D_{n,d}$, the following defined quantity is independent of the choice of $g$ or $\overline{g}$.

\begin{definition}\label{def:discriminant}
    Let $\cS$ be a quasiprojective complex curve with smooth compactification $\cS \hookrightarrow \overline{\cS}$. For a family of smooth hypersurfaces given by the map $f:\cS \to \cU_{n,d}$, define the \emph{discriminant} of this family as 
    \[
         \disc(f) := \deg_{\overline{\cS}} (\overline{g}^* \cO(D_{n,d})).
    \]
\end{definition}

\begin{remark}
    We can interpret the discriminant of the family $f:\cS \to \cU_{n,d}$ as counting how many singular fibers there are in the induced family of hypersurfaces over $\overline{\cS}$.
\end{remark}

By B\'ezout's theorem,
\begin{align}
    \disc(f) &= (\deg \overline g(\overline{\cS}))(\deg D_{n,d})(\deg \overline g) \label{eqn:discproduct}
\end{align}
where the first term on the right-hand side is the degree of the image of $\overline{g}$ as a closed subvariety of $\bP^{N(n,d)}$, the second term is the degree of $D_{n,d}$ as a subvariety of $\bP^{N(n,d)}$, and the third term is the degree of the finite morphism $\overline g$ onto its image. Using this formulation of the discriminant, we can prove \Cref{cor:discriminant}.

\begin{proof}
    [Proof of \Cref{cor:discriminant}]
    The degree of the discriminant locus $D_{n,d}$ is classically known to be $(n+2)(d-1)^n$ (see, e.g.~\cite[Prop.~7.4]{eisenbud20163264}). Using (\ref{eqn:discproduct}), we see that for a fixed base curve $\cS$ and dimension $n$ of hypersurfaces, there are only finitely many possible choices of $d$ to achieve a bounded discriminant. Moreover, for each such choice of $(n,d)$, \Cref{thm:main-hypersurfaces} implies that there are only finitely many Hodge-generic families of smooth projective hypersurfaces of degree $d$ and dimension $n$ over $\cS$, and we are done.
\end{proof}

\begin{remark}
    Using an analogous definition of discriminant in terms of intersection, a similar argument shows that there are only finitely many families of smooth projective hypersurfaces of dimension $n$ and arbitrary degree $d$ over a fixed quasiprojective $\bF_q$-curve $\cS$ for a given finite field $\bF_q$. In place of \Cref{thm:main-hypersurfaces}, one only needs to use the fact that for each $d$, there are only finitely many morphisms of bounded degree from $\overline{\cS} \to \bP^{N(n,d)}$ defined over $\bF_q$. 
\end{remark}

\bibliographystyle{alpha}
\bibliography{bibliographie}

\end{document}